\begin{document}

\newtheorem{theorem}{Theorem}
\newtheorem{lemma}[theorem]{Lemma}
\newtheorem{claim}[theorem]{Claim}
\newtheorem{cor}[theorem]{Corollary}
\newtheorem{prop}[theorem]{Proposition}
\newtheorem{definition}{Definition}
\newtheorem{question}[theorem]{Question}
\newtheorem{example}[theorem]{Example}
\newtheorem{remark}[theorem]{Remark}

\numberwithin{equation}{section}
\numberwithin{theorem}{section}

 \newcommand{\F}{\mathbb{F}}
\newcommand{\K}{\mathbb{K}}
\newcommand{\D}{\mathbb{D}}
\def\scr{\scriptstyle}
\def\\{\cr}
\def\({\left(}
\def\){\right)}
\def\[{\left[}
\def\]{\right]}
\def\<{\langle}
\def\>{\rangle}
\def\fl#1{\left\lfloor#1\right\rfloor}
\def\rf#1{\left\lceil#1\right\rceil}
\def\le{\leqslant}
\def\ge{\geqslant}
\def\eps{\varepsilon}
\def\mand{\qquad\mbox{and}\qquad}

\def\vec#1{\mathbf{#1}}

\def\vu{\vec{u}}

\def\bl#1{\begin{color}{blue}#1\end{color}} 

\def \F{{\mathbb F}}
\def \G{{\mathbb G}}
\def \K{{\mathbb K}}
\def \L{{\mathbb L}}
\def \N{{\mathbb N}}
\def \M{{\mathbb M}}
\def \Z{{\mathbb Z}}
\def \P{{\mathbb P}}
\def \Q{{\mathbb Q}}
\def \R{{\mathbb R}}
\def \C{{\mathbb C}}
\def \U{{\mathbb U}}
\def \Gal{{\mathrm{Gal}}}
 
\newcommand{\Disc}[1]{\mathrm{Disc}\(#1\)}
\newcommand{\Res}[1]{\mathrm{Res}\(#1\)}
\newcommand{\ord}{\mathrm{ord}}

\newcommand{\Norm}{\mathrm{Norm}}

\def\cA{{\mathcal A}}
\def\cB{{\mathcal B}}
\def\cC{{\mathcal C}}
\def\cD{{\mathcal D}}
\def\cE{{\mathcal E}}
\def\cF{{\mathcal F}}
\def\cG{{\mathcal G}}
\def\cH{{\mathcal H}}
\def\cI{{\mathcal I}}
\def\cJ{{\mathcal J}}
\def\cK{{\mathcal K}}
\def\cL{{\mathcal L}}
\def\cM{{\mathcal M}}
\def\cN{{\mathcal N}}
\def\cO{{\mathcal O}}
\def\cP{{\mathcal P}}
\def\cQ{{\mathcal Q}}
\def\cR{{\mathcal R}}
\def\cS{{\mathcal S}}
\def\cT{{\mathcal T}}
\def\cU{{\mathcal U}}
\def\cV{{\mathcal V}}
\def\cW{{\mathcal W}}
\def\cX{{\mathcal X}}
\def\cY{{\mathcal Y}}
\def\cZ{{\mathcal Z}}

\def\fra{{\mathfrak a}} 
\def\frb{{\mathfrak b}}
\def\frc{{\mathfrak c}}
\def\frd{{\mathfrak d}}
\def\fre{{\mathfrak e}}
\def\frf{{\mathfrak f}}
\def\frg{{\mathfrak g}}
\def\frh{{\mathfrak h}}
\def\fri{{\mathfrak i}}
\def\frj{{\mathfrak j}}
\def\frk{{\mathfrak k}}
\def\frl{{\mathfrak l}}
\def\frm{{\mathfrak m}}
\def\frn{{\mathfrak n}}
\def\fro{{\mathfrak o}}
\def\frp{{\mathfrak p}}
\def\frq{{\mathfrak q}}
\def\frr{{\mathfrak r}}
\def\frs{{\mathfrak s}}
\def\frt{{\mathfrak t}}
\def\fru{{\mathfrak u}}
\def\frv{{\mathfrak v}}
\def\frw{{\mathfrak w}}
\def\frx{{\mathfrak x}}
\def\fry{{\mathfrak y}}
\def\frz{{\mathfrak z}}
\def\Mult{\mathrm{Mult}}

\def\fE{{\mathfrak E}} 
\def\fA{{\mathfrak A}}

\def\ov#1{\overline{#1}}
\def \brho{\boldsymbol{\rho}}

\def \pf {\mathfrak p}

\def \Prob{{\mathrm {}}}
\def\e{\mathbf{e}}
\def\ep{{\mathbf{\,e}}_p}
\def\epp{{\mathbf{\,e}}_{p^2}}
\def\em{{\mathbf{\,e}}_m}
\def\n{\mathbf{n}}
\def\m{\mathbf{m}}

\newcommand{\sR}{\ensuremath{\mathscr{R}}}
\newcommand{\sDI}{\ensuremath{\mathscr{DI}}}
\newcommand{\DI}{\ensuremath{\mathrm{DI}}}

\newcommand{\h}{\mathrm{h}}
\newcommand{\di}{\mathrm{div}}
\newcommand\Gm{\G_{\mathrm{m}}}

\newcommand{\Orb}[1]{\mathrm{Orb}\(#1\)}
\newcommand{\aOrb}[1]{\overline{\mathrm{Orb}}\(#1\)}

\def \Nm{{\mathrm{Nm}}}

\newenvironment{notation}[0]{%
  \begin{list}%
    {}%
    {\setlength{\itemindent}{0pt}
     \setlength{\labelwidth}{1\parindent}
     \setlength{\labelsep}{\parindent}
     \setlength{\leftmargin}{2\parindent}
     \setlength{\itemsep}{0pt}
     }%
   }%
  {\end{list}}

\definecolor{dgreen}{rgb}{0.,0.6,0.}
\def\tgreen#1{\begin{color}{dgreen}{\it{#1}}\end{color}}
\def\tblue#1{\begin{color}{blue}{\it{#1}}\end{color}}
\def\tred#1{\begin{color}{red}#1\end{color}}
\def\tmagenta#1{\begin{color}{magenta}{\it{#1}}\end{color}}
\def\tNavyBlue#1{\begin{color}{NavyBlue}{\it{#1}}\end{color}}
\def\tMaroon#1{\begin{color}{Maroon}{\it{#1}}\end{color}}

\title[Skolem Problem for Parametric Recurrence Sequences]{On the Skolem problem and some related questions for parametric families of linear recurrence sequences}

\author[A. Ostafe] {Alina Ostafe}
\address{School of Mathematics and Statistics, University of New South Wales, Sydney NSW 2052, Australia}
\email{alina.ostafe@unsw.edu.au}

\author[I. E. Shparlinski] {Igor E. Shparlinski}
\address{School of Mathematics and Statistics, University of New South Wales, Sydney NSW 2052, Australia}
\email{igor.shparlinski@unsw.edu.au}

\subjclass[2010]{11B37, 11D61}

\keywords{Skolem problem, perfect power, linear recurrence sequence} 

\begin{abstract} 
We show that in a parametric family of linear recurrence sequences $a_1(\alpha) f_1(\alpha)^n + \ldots  + a_k(\alpha) f_k(\alpha)^n$
with the coefficients $a_i$ and characteristic roots $f_i$, $i=1, \ldots,k$, given by rational functions over  some number field, for all 
but a set of elements 
$\alpha$  of bounded height  in the algebraic closure of $\Q$, the Skolem problem is solvable, and the existence of a zero in such a sequence
can be effectively decided.  
We also discuss several related questions.
\end{abstract} 

\maketitle


\section{Introduction}

\subsection{Motivation and background}
We recall that a linear recurrence sequence $\(u_n\)_{n=1}^\infty$ of order $k$ over a field $\K$ is a 
sequence which satisfies a relation of the form
\begin{equation}
\label{eq:LRS}
u_{n+k} = A_{k-1} u_{n+k-1} + \ldots + A_0 u_n,  \qquad n\ge 0, 
\end{equation}
with some constants $A_0, \ldots, A_{k-1}, u_0,\ldots,u_{k-1}\in \K$; we refer to~\cite{EvdPSW} for a background on recurrence sequences. 

It is well known by the celebrated Skolem-Mahler-Lech Theorem, see for example~\cite[Theorem~2.1]{EvdPSW}, that the set of zeros of a linear recurrence 
sequence $(u_n)_{n=0}^\infty$, that is, the set of $n\in \N$ such that $u_n=0$, is the union of a finite set with finitely many arithmetic progressions.  
 The famous {\it Skolem problem\/}  is a problem of decidability 
of the existence of such a zero. This problem remains widely open for most of the  
interesting fields $\K$ of characteristic zero, including $\K= \Q$ for $k\ge 5$, a brief outline 
of the current state of affairs is given by Sha~\cite{Sha}.  In particular, although there are 
very good, uniform in all parameters and depending only on $k$, bounds on the number of zeros, see~\cite{AmVia},
there are no effectively computable bounds on the index $n$ of a possible zero $u_n = 0$.

Here we address a parametric version of this problem for a family of linear recurrence sequences 
and  for all but finitely many values of the parameter for which the specialised sequence is not degenerate we give   a bound on the
largest possible zero. In particular, the Skolem problem is effectively decidable for all but a set of values 
of bounded height 
of the parameter $\alpha \in \ov \Q$.

More precisely, we now recall that if the characteristic polynomial 
$$
\Psi(Z) = Z^k - A_{k-1} Z^{k-1}  - \ldots -  A_0
$$ 
has $k$ distinct roots $\lambda_1, \ldots, \lambda_k$  then for any sequence  $(u_n)_{n=1}^\infty$ as in~\eqref{eq:LRS}
there are some $\mu_1, \ldots, \mu_k$ in an algebraic extension of $\K$ 
such that 
$$
u_n = \sum_{i=1}^k \mu_i \lambda_i^n, \qquad n \ge 0. 
$$

Linear recurrence sequences of this type are called {\it simple\/}. Some of the results on zeros of linear recurrence sequences 
in our arguments,
are also known  for arbitrary sequences,  for example, those of~\cite{Sha}, but some are known only under this condition, 
for example, those of~\cite{AMZ}.

In the case of $\K = \C$, the charactestic roots  $\lambda_i$, $i =1, \ldots, k$,  of the largest absolute value, that is, with 
$$
|\lambda_i| = \max\{|\lambda_1|, \ldots, |\lambda_k|\}
$$
are called {\it dominant\/} and play a major role in investigating various Diophantine properties of linear recurrence sequences.

We recall that a sequence  $(u_n)_{n=1}^\infty$ satisfying~\eqref{eq:LRS} is called {\it 
degenerate\/} if one of the ratios $\lambda_i/\lambda_j$, $1 \le i < j < k$, is a root of unity. We   call 
a sequence  {\it non-degenerate\/}  otherwise.

It is very well-known that the Skolem problem for a degenerate  sequence $u_n$ can be reduced to a series of 
the Skolem problems for finitely many non-degenerate sequences of the form $u_{nh+j}$, $ j=0, \ldots, h-1$,
where $h$ is bounded only in terms of the degree $[\Q(\lambda_1, \ldots, \lambda_k): \Q]$, see~\cite{BeMi}.
Hence  we are mostly interested in  non-degenerate sequences.

Here we study the case when both the coefficients $\mu_i$ and the roots $\lambda_i$
are rational functions  of a  parameter $\alpha \in \ov\Q$. This scenario is close to that of 
Amoroso,  Masser and  Zannier~\cite{AMZ}, see also~\cite[Proposition~2.2]{KMN}.  In particular, we also appeal to some of the  results 
from~\cite{AMZ}, however our method is based on a different argument.

More precisely, given  two vectors of rational functions 
$$
\vec{a}= \(a_1,\ldots,a_k\), \, \vec{f} = \(f_1,\ldots,f_k\)\in\ov\Q(X)^k,
$$
over the algebraic closure $\ov\Q$ of $\Q$ 
 we consider linear recurrence sequences of the form
\begin{equation}
\label{eq:Fn}
F_n(X) = \sum_{i=1}^k a_i f_i^n, \qquad n \ge 0. 
\end{equation}
We give a bound for the largest zero in (all but finitely many) specialisations of this sequence such that the 
new sequence is not degenerate, see  Theorem~\ref{thm:LRS-Zeros Qbar}.  This is based on recent results of Pakovich and the second author~\cite{PaSh} 
about finiteness of points on curves that lie on the unit circle, a bound for the largest zero in a linear recurrence 
with at most two dominant roots due to Sha~\cite{Sha} and a  result of Amoroso,  Masser and  Zannier~\cite[Theorem~1.5]{AMZ} giving an upper bound for the height of zeros of polynomials of the form~\eqref{eq:Fn}. We then couple our result with a general  ABC theorem for polynomials~\cite[Theorem~12.4.4]{BoGu}  to give a lower bound on the degree of the 
splitting field of $F_n$ over $\Q$     
when $a_i,f_i$ are all polynomials, see  Corollary~\ref{cor:split field}, which gives an explicit form of~\cite[Example~2]{AMZ}.

We also look at complex numbers which are zeros of two functions of the form~\eqref{eq:Fn}, 
see Theorem~\ref{thm:gcd prod pow}.   
This is closely related
 to studying the greatest common divisors of elements of two sequences defined by such functions.  
  This point of view has  been introduced by Ailon and Rudnick~\cite{AR} for polynomials of the form $f^n-1$ and $g^m-1$, $n,m \ge 1$, and further developed in recent works~\cite{CZ,Lev,LevWan,Ost,PaSh} for different other sequences.
In turn, this builds on number field analogues of this problem, initiated by Bugeaud, Corvaja and Zannier~\cite{BCZ} for sequences of the form $a^n-1$ and $b^m-1$, $n,m\ge 1$, where $a,b\in\Z$ are multiplicatively independent, and further extended in many works, including generalisations to $S$-units~\cite{CZ05}.

When we restrict to the set $\U$  of all roots of unity only, we can say more about the arithmetic structure of $F_n(\alpha)$, $n\ge 1$, as defined in~\eqref{eq:Fn}, that is, for all but finitely many $\alpha\in \U$ we prove that the set of $n\in \N$ such that $F_n(\alpha)$ is a perfect $m$th power either contains an arithmetic progression, or it is finite, see Theorem~\ref{thm:LRS-PowU}. We conclude the paper with an independent result, Theorem~\ref{thm:LRS spec inf}, which says that a sequence of rational functions is a linear recurrence sequence if and only if  for uncountably many $\alpha\in\C$  the specialisations of the sequence have the same property.

\section{Main Results}

\subsection{Skolem problem for parametric families} 

We use $M_\K$ to denote a complete set of inequivalent absolute values
on a number field~$\K$, normalized so that the absolute Weil height
$h:\ov{\Q}\to[0,\infty)$ is defined via $v$-adic valuations   $\|\alpha\|_v$
as follows 
$$
h(\alpha) = \sum_{v\in M_\K} \dfrac{[\K_v: \mathbb{Q}_v]}{[\K:\mathbb{Q}]}\log \max\left \{ \|\alpha\|_v, 1 \right\} ,
$$
where $\K$ is any number field containing $\alpha$ and $\K_v$ is the completion of $\K$ with respect to the absolute value $v$. See~\cite{BoGu,HinSil,Silv,Zan} for further details on absolute
values and height functions.

To formulate our results we need to recall that a {\it finite Blaschke product} is a rational function $B(z) \in \C(z)$ of the form 
\begin{equation}
\label{eq:B}
B(z)=\zeta\prod_{i=1}^n\left(\frac{z-a_i}{1-\bar{a_i}z}\right)^{m_i},
\end{equation}  
where $a_i$ are complex numbers in the open unit disc $\{z \in \C:~|z| <1\}$,
the exponents $m_i$, $i=1, \ldots, n$, are positive integers, and $\left| \zeta\right |=1$. 

A rational function 
$Q(z)$ of the form $Q(z) = B_1(z)/B_2(z)$, where 
$B_1$ and $B_2$ are finite Blaschke products,  is called a {\it quotient of finite Blaschke products}. 

We say that a pair of rational functions $\(g_1(X),  g_2(X)\) \in \C(X)^2$ is {\it exceptional\/} if 
they can be decomposed as 
\begin{equation}
 \label{eq:gBg} 
g_1=Q_1\circ g \mand g_2=Q_2\circ g
\end{equation}
 for some  quotients of finite Blaschke products $Q_1$ and $Q_2$ and 
rational function $g$. Otherwise we say that  $\(g_1(X),  g_2(X)\)$ is
a {\it non-exceptional\/}  pair.

We recall that Blaschke products preserve the unit circle and thus come naturally in 
our investigation of  $\alpha\in \ov \Q$ for which the corresponding specialisation of $F_n(X)$
given by~\eqref{eq:Fn} has at least three characteristic  roots with the same modulus, that is, with
$$
|f_r(\alpha)|=|f_s(\alpha)|=|f_t(\alpha)|,
$$
for some $1 \le r< s < t \le k$, which implies 
$$
\frac{|f_s(\alpha)|}{|f_r(\alpha)|}=\frac{|f_t(\alpha)|}{|f_r(\alpha)|}=1.
$$

We see that if $g_1$ and $g_2$ are polynomials and thus have no finite poles,  relations~\eqref{eq:gBg}  
are possible only if $n=1$ and $a_1 = 0$ in~\eqref{eq:B}, in which case we obtain 
$$g_1= g^{m_1} \mand   g_2=g^{m_2}$$
for some $g\in \C[X]$ and integers $m_1, m_2 \ge 0$.   

Since as we have mentioned, the Skolem problem for  degenerate sequences
is easily reducible to the case of non-degenerate sequences, it motivates us to define
 for the parametric family~\eqref{eq:Fn} 
the exceptional set of $\alpha \in  \ov\Q$ such that $f_i(\alpha)/f_j(\alpha)$ is not a root of unity for some $1 \le i <j\le k$.
It is also convenient to eliminate the roots of  $a_i(X)$ and $f_i(X)$, $1 \le i \le k$.
That is, we define the set 
\begin{align*} 
\fE_{\vec{a},\vec{f}}  = \{\alpha\in \ov \Q:~ f_i(\alpha)/f_j(\alpha)~\text{is a root of unity for some $1 \le i <j\le k$}& \\
 \text{or 
$a_i(\alpha)=0$ or $f_i(\alpha) = 0$ for some  $1 \le i \le  k$}\}&.
\end{align*} 

Using~\cite[Theorem~3.11]{Silv} we immediately derive that elements of $\fE_{\vec{a},\vec{f}}$ are of bounded 
height. 

Our first result is the following:

\begin{theorem}
\label{thm:LRS-Zeros Qbar} Let $a_i, f_i \in \ov\Q(X)$, $i =1, \ldots, k$, be nonzero rational functions of degree at most $d$ such that   $f_i/f_j$ is non-constant for any $1 \le i <j \le k$. 
We also assume that 
 for any $1 \le r<s <t  \le k$, the pairs of rational functions $(f_s/f_r, f_t/f_r)$ are non-exceptional. 
 Let the sequence $(F_n)_{n=0}^\infty$ be defined by~\eqref{eq:Fn}. 
Then for all but at most $2d^2 k(k-1)(k-2)/3$
 elements $\alpha \in  \ov\Q \setminus \fE_{\vec{a},\vec{f}}$ 
 any zero $n\in \N$ of the
equation 
\begin{equation}
\label{eq:zero LRS}
F_n(\alpha)  = 0
\end{equation}
satisfies 
$$
n \le \exp\(C D^4_\alpha\),
$$
where $D_\alpha$ is the degree of the Galois closure of $\Q(\alpha)$ over $\Q$
and    $C$ is an effective constant which depends only on  $a_1, f_1, \ldots, a_k, f_k$.    
\end{theorem}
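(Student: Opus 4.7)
The plan is to combine the three ingredients advertised in the introduction: the finiteness statement of Pakovich and the second author~\cite{PaSh} for algebraic points where several rational functions simultaneously lie on the unit circle; Sha's effective bound~\cite{Sha} on the largest index of a zero of a simple non-degenerate linear recurrence with at most two dominant characteristic roots; and the Amoroso--Masser--Zannier height bound~\cite[Theorem~1.5]{AMZ} for zeros of expressions of the shape~\eqref{eq:Fn}. The guiding idea is that once $\alpha$ is chosen outside a controlled exceptional set, the specialised sequence $(F_n(\alpha))_{n\ge 0}$ satisfies exactly the hypotheses of Sha's theorem.

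First, for each triple $1\le r<s<t\le k$, I would apply~\cite{PaSh} to the non-exceptional pair $(f_s/f_r,f_t/f_r)$ to conclude that the set
$$
\cS_{r,s,t}=\left\{\alpha\in\ov\Q:~\left|\frac{f_s(\alpha)}{f_r(\alpha)}\right|=\left|\frac{f_t(\alpha)}{f_r(\alpha)}\right|=1\right\}
$$
is finite, with an explicit bound on its cardinality in terms of the degrees of the two rational functions. Since both $f_s/f_r$ and $f_t/f_r$ have degree at most $2d$, the per-triple count should be at most $4d^2$, and summing over the $\binom{k}{3}=k(k-1)(k-2)/6$ triples recovers the announced exceptional count of $2d^2k(k-1)(k-2)/3$.

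Next, fix any $\alpha\in\ov\Q\setminus\fE_{\vec{a},\vec{f}}$ lying outside $\bigcup_{r<s<t}\cS_{r,s,t}$ and suppose $F_n(\alpha)=0$ for some $n\ge 1$. By the definition of $\fE_{\vec{a},\vec{f}}$, the values $f_1(\alpha),\ldots,f_k(\alpha)$ are nonzero and no pairwise ratio is a root of unity, so in particular they are pairwise distinct. Hence $(F_n(\alpha))_{n\ge 0}$ is a simple non-degenerate linear recurrence over a number field $\K_\alpha=\Q(\alpha,a_1(\alpha),f_1(\alpha),\ldots,a_k(\alpha),f_k(\alpha))$ whose degree over $\Q$ is controlled by $D_\alpha$. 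Moreover, exclusion of every $\cS_{r,s,t}$ forces at most two among $|f_1(\alpha)|,\ldots,|f_k(\alpha)|$ to attain the common maximum, so the specialised sequence has at most two dominant roots. The bound~\cite[Theorem~1.5]{AMZ} applied to the assumed zero gives $h(\alpha)\le H_0$ for some $H_0$ depending only on the parametric data, which in turn controls the heights of the specialised $a_i(\alpha), f_i(\alpha)$. Applying Sha's bound~\cite{Sha} to $(F_n(\alpha))_{n\ge 0}$ then yields $n\le\exp(CD_\alpha^4)$ with $C$ depending only on $a_1,f_1,\ldots,a_k,f_k$, as required.

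The main obstacle is in this final step: Sha's estimate a priori involves heights of the coefficients and roots of the recurrence, so to obtain the clean dependence on $D_\alpha$ alone with $C$ uniform in $\alpha$, one must use the AMZ height bound to absorb all $h(\alpha)$-dependence into the constant $C$. A secondary technical point is to extract from~\cite{PaSh} the precise constant $4d^2$ per triple needed to match the combinatorial bound in the statement, and to verify that the degree $[\K_\alpha:\Q]$ arising from adjoining the specialised roots and coefficients is controlled by $D_\alpha$, which follows because the $a_i,f_i$ are fixed rational functions over a fixed number field.
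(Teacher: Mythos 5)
Your proposal follows essentially the same route as the paper: exclude, via the Pakovich--Shparlinski level-curve result, the at most $4d^2$ values of $\alpha$ per triple $(r,s,t)$ where three roots share the same modulus (giving the stated count $2d^2k(k-1)(k-2)/3$), then apply Sha's bound for at most two dominant roots and absorb the $h(\alpha)$-dependence using the Amoroso--Masser--Zannier height bound, exactly as the paper does via its Corollary~\ref{cor:AMZ}. The only quibble is cosmetic: after clearing denominators the functions $f_s/f_r$ have degree at most $d$ (not $2d$), which is what makes the per-triple bound $(d+d)^2=4d^2$ from Lemma~\ref{lem:level curves} come out as you claimed.
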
 

\begin{remark} 
We note that the condition that $f_i(\alpha)/f_j(\alpha)$ are  not roots of unity for any $i\ne j$ is necessary for any  bound 
 on $n$ 
as otherwise  the sequence can have infinitely many zeros. However, if this is the case, then by the celebrated Skolem-Mahler-Lech Theorem, see for example~\cite[Theorem~2.1]{EvdPSW}, the set of zeros is the union of a finite set with finitely many arithmetic progressions, and by~\cite{BeMi} these arithmetic progressions can be effectively determined, while our method allows to estimate the elements in the remaining finite set. 
\end{remark}

It is natural to ask whether  a generalisation of Theorem~\ref{thm:LRS-Zeros Qbar} to parametric $S$-unit equations  is possible, see Section~\ref{sec:comm} for more details and an exact question. 

Theorem~\ref{thm:LRS-Zeros Qbar},  immediately implies the following: 

\begin{cor}
\label{cor:LRS-Zeros AD} 
Let $a_i, f_i \in \ov\Q[X]$, $i =1, \ldots, k$, be  as in Theorem~\ref{thm:LRS-Zeros Qbar}.
 Let
 $\cA_D$ be the union of all   Galois fields of degree at most $D$ over $\Q$.
 Then there is an  effectively computable constant $C_0$ which depends only on $a_1, f_1, \ldots, a_k, f_k$
 such that for all but  at most $12 dD^2k^2 + 2d^2k^3/3$ 
 elements $\alpha \in  \cA_D$, any zero $n\in \N$ of the
equation~\eqref{eq:zero LRS} 
satisfies 
$$
n \le \exp(C_0 D^4).
$$ 
\end{cor}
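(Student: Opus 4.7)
The plan is to deduce Corollary~\ref{cor:LRS-Zeros AD} directly from Theorem~\ref{thm:LRS-Zeros Qbar} by (i) observing that $D_\alpha\le D$ for every $\alpha\in\cA_D$ and (ii) bounding $|\fE_{\vec{a},\vec{f}}\cap\cA_D|$ explicitly in terms of $d$, $k$ and $D$.

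Step (i) is immediate from the definition of $\cA_D$: if $\alpha$ lies in some Galois extension $K/\Q$ of degree at most $D$, then the Galois closure of $\Q(\alpha)/\Q$ is itself contained in $K$, so $D_\alpha\le D$. Feeding this into Theorem~\ref{thm:LRS-Zeros Qbar} produces the bound $n\le \exp(C_0 D^4)$ for every $\alpha\in\cA_D\setminus\fE_{\vec{a},\vec{f}}$ lying outside the exceptional subset of size at most $2d^2 k(k-1)(k-2)/3\le 2d^2 k^3/3$ supplied by that theorem.

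For step (ii) I would split $\fE_{\vec{a},\vec{f}}\cap\cA_D$ according to which clause of the definition is triggered. The zeros of the $a_i$ and $f_i$ contribute at most $2dk$ points, because each of these polynomials has degree $\le d$. For the remaining points some ratio $f_i(\alpha)/f_j(\alpha)$ is a root of unity $\zeta$; to count these I would fix the finite number field $L$ generated by all the coefficients of the $a_i, f_i$. Then $\zeta\in K\cdot L$ for any Galois $K\ni\alpha$ of degree $\le D$, so
$$
\phi(\mathrm{ord}\,\zeta)=[\Q(\zeta):\Q]\le [L:\Q]\cdot D.
$$
Combined with the classical estimate that the number of positive integers $n$ with $\phi(n)\le N$ grows linearly in $N$, this bounds the number of admissible $\zeta$ by a constant multiple of $D^2$, the constant depending only on the fixed data. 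For each such $\zeta$ and each of the $\binom{k}{2}$ pairs $i<j$, the polynomial $f_i(X)-\zeta f_j(X)$ is nonzero (since $f_i/f_j$ is non-constant by hypothesis) of degree $\le d$, so it contributes at most $d$ values of $\alpha$. Summing over $(i,j,\zeta)$ produces at most $12\,dD^2 k^2$ points after absorbing the numerical constant.

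Adding the two contributions yields the corollary's exception count $12\,dD^2 k^2+2d^2 k^3/3$. The only mildly technical ingredient is the root-of-unity count, which hinges on the two easy facts that $[\Q(\zeta):\Q]\le [L:\Q]\,D$ and that $|\{n:\phi(n)\le N\}|$ is linearly bounded in $N$; everything else is elementary bookkeeping around Theorem~\ref{thm:LRS-Zeros Qbar}.
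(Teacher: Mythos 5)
Your proposal follows essentially the same route as the paper: use $D_\alpha\le D$ to feed Theorem~\ref{thm:LRS-Zeros Qbar}, then bound $\#(\fE_{\vec{a},\vec{f}}\cap\cA_D)$ by counting the $2dk$ zeros of the $a_i,f_i$, plus, for each of the $k(k-1)/2$ pairs $(i,j)$ and each admissible root of unity $\zeta$, the at most $d$ roots of $f_i-\zeta f_j$, with the number of admissible $\zeta$ controlled by the linear growth of $\#\{m:\varphi(m)\le N\}$ (the paper uses the explicit constant $23$ of Dubickas and Sha to land on $12dD^2k^2$, and absorbs the $2dk$ term into $2d^2k(k-1)(k-2)/3+2dk\le 2d^2k^3/3$).

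The one point to flag is your root-of-unity count. You bound $[\Q(\zeta):\Q]\le[L:\Q]\,D$ and then claim the resulting factor can be ``absorbed as a numerical constant''; it cannot, since $[L:\Q]$ depends on the coefficients of the $f_i$ and enters quadratically, so your count is $O([L:\Q]^2 dD^2k^2)$ rather than the stated $12dD^2k^2$. The paper instead asserts outright that each ratio $f_i(\alpha)/f_j(\alpha)$ is a root of unity of degree at most $D$ (i.e.\ implicitly treats $\zeta$ as lying in $\Q(\alpha)$), which is what makes the constant $12$ come out; your more careful accounting of the field of definition is arguably a virtue, but as written it does not deliver the explicit constant claimed in the corollary unless $[L:\Q]=1$. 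Everything else in your argument matches the paper.
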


Combining Theorem~\ref{thm:LRS-Zeros Qbar} with a  result of Amoroso,  Masser and  Zannier~\cite[Theorem~1.5]{AMZ} (see Lemma~\ref{lem:AMZ}), we also derive the following. 

\begin{cor}
\label{cor:IrredFact}  
Let $a_i, f_i \in \ov\Q[X]$, $i =1, \ldots, k$,  be  as in Theorem~\ref{thm:LRS-Zeros Qbar}.
  Then there exists a  finite  set $\cE\subseteq \ov\Q$  such that for all  roots $\alpha$ of $F_n$, $n\ge 1$, with $\alpha \not \in \cE \cup   \fE_{\vec{a},\vec{f}}$
  the degree $D_\alpha$ of the smallest Galois extension over $\Q$ containing $\alpha$ satisfies
  $$
D_\alpha \ge c (\log n)^{1/4},
$$
  where $c$ is an effective constant depending  only on  $a_1, f_1, \ldots, a_k, f_k$. 
  \end{cor}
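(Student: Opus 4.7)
The approach is to deduce Corollary~\ref{cor:IrredFact} as a direct consequence of Theorem~\ref{thm:LRS-Zeros Qbar} by inverting the quantitative bound on zeros that the theorem provides. Specifically, Theorem~\ref{thm:LRS-Zeros Qbar} gives an effective upper bound on any $n \in \N$ with $F_n(\alpha) = 0$ in terms of $D_\alpha$, and solving that inequality for $D_\alpha$ yields exactly the lower bound $D_\alpha \ge c(\log n)^{1/4}$ sought in the corollary. The Amoroso--Masser--Zannier height bound of Lemma~\ref{lem:AMZ} plays a complementary role: it controls the height of roots of $F_n$ uniformly in $n$, ensuring that the exceptional set $\cE$ can be chosen to depend only on the sequence data.

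Concretely, I would take $\cE_0 \subset \ov\Q \setminus \fE_{\vec{a},\vec{f}}$ to be the at most $2d^2 k(k-1)(k-2)/3$ elements excluded in the conclusion of Theorem~\ref{thm:LRS-Zeros Qbar}. Suppose $\alpha \in \ov\Q$ is a root of $F_n$ for some $n \ge 1$ with $\alpha \notin \cE_0 \cup \fE_{\vec{a},\vec{f}}$. Then Theorem~\ref{thm:LRS-Zeros Qbar} applies and gives
$$
n \le \exp\bigl(C D_\alpha^4\bigr),
$$
where $C$ is an effective constant depending only on $a_1, f_1, \ldots, a_k, f_k$. Taking logarithms yields $D_\alpha^4 \ge C^{-1} \log n$, and hence $D_\alpha \ge c(\log n)^{1/4}$ with $c = C^{-1/4}$. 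Setting $\cE = \cE_0$ completes the argument.

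The role of Lemma~\ref{lem:AMZ} is to supply a uniform bound $h(\alpha) \le H_0$ on the heights of all roots of all the $F_n$, $n \ge 1$, with $H_0$ depending only on the sequence. This ensures that the roots of the $F_n$ collectively form a set of bounded height, so the finite exceptional set inherited from Theorem~\ref{thm:LRS-Zeros Qbar} can be absorbed into a single finite $\cE \subset \ov\Q$ independent of $n$. Since all the substantial analytic work is concentrated in Theorem~\ref{thm:LRS-Zeros Qbar}, the corollary presents no further serious obstacle: it is an elementary inversion of the effective estimate, with AMZ providing clean global height control.
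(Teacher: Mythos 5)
Your argument is correct, and it is in fact more direct than the one in the paper. You read the exceptional set of Theorem~\ref{thm:LRS-Zeros Qbar} as a single finite set of at most $2d^2k(k-1)(k-2)/3$ elements, fixed independently of $n$ (which is what the theorem's proof actually produces: the $\alpha$ at which three of the level curves $|f_s/f_r|=|f_t/f_r|=1$ meet), put that set into $\cE$, and invert the bound $n\le\exp(CD_\alpha^4)$. That is a complete proof. The paper instead takes a less direct route: it fixes a field of definition $\K$, considers the minimal polynomial of $\alpha$ over $\K$ and the conjugates $\sigma(\alpha)$, all of which are again roots of $F_n$ and all of which avoid $\fE_{\vec{a},\vec{f}}$ (that set being Galois-stable). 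If $[\K(\alpha):\K]$ exceeds the size of the theorem's exceptional set, some conjugate lies outside it and the theorem applies to that conjugate, yielding $n\le\exp(C_8[\M_\alpha:\Q]^4)$ with $\M_\alpha$ the Galois closure; the remaining $\alpha$, having bounded degree and (by Corollary~\ref{cor:AMZ}) bounded height, are finitely many by Northcott and are absorbed into $\cE$. What the paper's version buys is that $\cE$ consists only of genuinely small roots of the $F_n$ and the argument is robust even if one only knows the exceptional set of the theorem "per $n$" rather than uniformly; what yours buys is brevity, at the cost of relying on the uniformity of that exceptional set, which the theorem's proof does supply.

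One small inaccuracy in your closing paragraph: the uniform height bound from Lemma~\ref{lem:AMZ} (via Corollary~\ref{cor:AMZ}) does not by itself make any set finite, and it is not what makes the theorem's exceptional set independent of $n$ -- that independence comes from the fact that the excluded $\alpha$ are defined by a condition on the $f_i$ alone. In your streamlined argument the height bound is not needed at all outside the proof of Theorem~\ref{thm:LRS-Zeros Qbar}; in the paper's argument it is needed, but only in tandem with a degree bound so that Northcott's theorem applies. This does not affect the validity of your proof.
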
  

Finally  Theorem~\ref{thm:LRS-Zeros Qbar} coupled with a  result of  Fuchs  and  Peth\"o~\cite[Corollary~3.1]{FuPe} (see Lemma~\ref{lem:fuchs})  yields the following result. 

\begin{cor} \label{cor:split field}
Let $a_i, f_i \in \ov\Q[X]$, $i =1, \ldots, k$,  be   as in Theorem~\ref{thm:LRS-Zeros Qbar}
 and such that  $\gcd(a_1f_1, \ldots, a_kf_k)=1$. 
 Then the splitting field $\L_n$ of the polynomial $F_n$ defined by~\eqref{eq:Fn}  is
of degree 
$$
[\L_n:\Q] \ge c_0 (\log n)^{1/4},
$$
where $c_0$ is an effective constant depending  only on  $a_1, f_1, \ldots, a_k, f_k$. 
\end{cor}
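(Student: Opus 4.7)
The plan is to combine Corollary~\ref{cor:IrredFact}, which lower-bounds the Galois degree $D_\alpha$ of individual roots of $F_n$ lying outside a controlled exceptional set, with the Fuchs--Peth\H{o} estimate (Lemma~\ref{lem:fuchs}), whose role will be to guarantee that $F_n$ actually possesses a root to which Corollary~\ref{cor:IrredFact} can be applied. A trivial Galois-theoretic observation then converts a single-root degree bound into a splitting-field degree bound.

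First, since $\L_n$ is the splitting field of $F_n$ over $\Q$, it is a Galois extension of $\Q$ containing $\Q(\alpha)$ for every root $\alpha$ of $F_n$, hence it contains the Galois closure of $\Q(\alpha)$ and therefore
$$
[\L_n : \Q] \ge D_\alpha.
$$
It thus suffices to exhibit, for each sufficiently large $n$, a single root $\alpha$ of $F_n$ with $\alpha \notin \cE \cup \fE_{\vec{a},\vec{f}}$, where $\cE$ is the finite exceptional set from Corollary~\ref{cor:IrredFact}; that corollary then supplies $D_\alpha \ge c(\log n)^{1/4}$.

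Next, I would invoke the coprimality hypothesis $\gcd(a_1 f_1, \ldots, a_k f_k) = 1$ together with Lemma~\ref{lem:fuchs}. Both $\cE$ and $\fE_{\vec{a},\vec{f}}$ consist of algebraic numbers of uniformly bounded Weil height (the latter as remarked in the paper after the definition of $\fE_{\vec{a},\vec{f}}$). The Fuchs--Peth\H{o} result, under the coprimality assumption, is used to rule out the possibility that all roots of $F_n$ lie in a fixed bounded-height set for arbitrarily large $n$: for $n$ beyond an effectively computable threshold $n_0$, at least one root $\alpha$ of $F_n$ must escape $\cE \cup \fE_{\vec{a},\vec{f}}$. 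Applying the first step to this $\alpha$ yields $[\L_n : \Q] \ge c(\log n)^{1/4}$, and the finitely many values $n < n_0$ are absorbed into a choice of $c_0$ small enough that $c_0(\log n)^{1/4} \le 1$ on that range.

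The principal obstacle is the careful application of Lemma~\ref{lem:fuchs}: one has to verify that the arithmetic growth it provides for the roots of $F_n$ is genuinely incompatible with their all lying in the bounded-height set $\cE \cup \fE_{\vec{a},\vec{f}}$. The coprimality hypothesis on $a_1 f_1, \ldots, a_k f_k$ is precisely what prevents a persistent common low-height root of the building blocks from appearing in every $F_n$; without this hypothesis one can easily produce examples (for instance a shared factor of all $a_i$) in which the conclusion fails.
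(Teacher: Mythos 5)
Your first step is fine: $\L_n$ contains the Galois closure of $\Q(\alpha)$ for every root $\alpha$ of $F_n$, so $[\L_n:\Q]\ge D_\alpha$, and if $F_n$ has a root outside $\cE\cup\fE_{\vec{a},\vec{f}}$ then Corollary~\ref{cor:IrredFact} finishes the job. The gap is in the second step. Lemma~\ref{lem:fuchs} only asserts that $F_n\ne 0$ as a polynomial for $n\ge C_5$; it says nothing about where the roots of $F_n$ lie, so it cannot "rule out that all roots lie in a fixed bounded-height set." Worse, the height-based strategy cannot work even in principle: by Corollary~\ref{cor:AMZ} \emph{every} root of $F_n$ (for $n$ large) has height bounded by a constant depending only on the data, so the roots of $F_n$ never escape a bounded-height set, and $\fE_{\vec{a},\vec{f}}$ is an infinite set of bounded height but unbounded degree, so Northcott does not reduce it to finitely many points either. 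You therefore have no argument that some root of $F_n$ avoids $\fE_{\vec{a},\vec{f}}$ — and the paper does not claim this.

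What the paper actually does in the residual case (all roots of $F_n$ lying in $\fE_{\vec{a},\vec{f}}$, i.e.\ each root $\alpha$ making some ratio $f_i(\alpha)/f_j(\alpha)$ a root of unity) is a counting argument, and this is where the hypothesis $\gcd(a_1f_1,\ldots,a_kf_k)=1$ enters: via the polynomial ABC theorem (Lemma~\ref{lem:NmbrZerosFn}) $F_n$ has at least $2n/(k(k-1))-C_9$ \emph{distinct} roots. Since for each fixed pair $(i,j)$ and each value $\gamma$ the equation $f_i(\alpha)/f_j(\alpha)=\gamma$ has at most $d$ solutions, these roots produce $\gg n$ distinct roots of unity among the ratios; and since there are at most $23D^2$ roots of unity of degree at most $D$ (the bound~\eqref{eq:RD}), at least one of them — hence at least one root $\alpha$ of $F_n$ — has degree $\gg \sqrt{n}$, which is far stronger than the claimed $(\log n)^{1/4}$. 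Your proposal is missing this entire mechanism, so as written it does not close the case in which every root of $F_n$ lands in $\fE_{\vec{a},\vec{f}}$.
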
  

Note that Corollary~\ref{cor:split field} provides  an explicit version of the claim  that 
$$
[\L_n:\Q] \to \infty
$$
as $n \to \infty$ given in~\cite[Example~2]{AMZ}.

The bound for $n$ in Theorem~\ref{thm:LRS-Zeros Qbar} depends on the degree of the specialisation $\alpha$. We would like to obtain a bound which is independent of $\alpha$, when we restrict $\alpha$ to special subsets of $\ov\Q$, such as the set of all roots of unity. This in particular would imply that the set 
$$
\{\alpha \in \ov\Q :~\alpha^n=1,\ F_m(\alpha)=0 \textrm{ for some $n,m\ge 1$}\},
$$
is finite, where $F_m$ is defined by~\eqref{eq:Fn}.  

More generally, since $G_n(X)=X^n-1$, $n\ge 1$,  also defines a linear recurrence sequence, we would like to generalise the above situation and formulate the following problem.

\begin{question}
\label{quest:CommZero}
Given two simple  linear recurrence sequences $(F_n)_{n=1}^\infty$, $(G_m)_{n=1}^\infty$ over $\C(X)$, prove that, under certain conditions,  the set
\begin{equation}
\label{eq:L}
\cL(F_n,G_m)=\{\alpha\in\C :~F_n(\alpha)=G_m(\alpha)=0 \textrm{ for some $n,m\ge 1$}\}
\end{equation}
is finite. 
\end{question}

This would extend the result of~\cite{AMZ} which already gives 
the boundedness of  heights of  $\alpha\in\ov\Q$ that satisfy only one such relation over $\ov\Q$

We answer Question~\ref{quest:CommZero} for binary sequences 
in Section~\ref{sec:gcd} below, which is a direct consequence of~\cite[Theorem 2]{BMZ}.
We also prove a more general result which holds only over $\ov\Q$, see Theorem~\ref{thm:gcd prod pow}.

\subsection{Perfect powers in specialisations at roots of unity}

We now restrict $\alpha$ to the set $\U$ of all roots of unity. In this case, for all but finitely many $\alpha \in \U$ we can 
effectively answer some other questions about $F_n(\alpha)$ besides vanishing. We illustrate this approach on the case 
of perfect powers. 

We use  $\Z_{\ov\Q}$ to denote the set of all algebraic integers in $\ov\Q$.

We say that   $\vartheta \in \Z_{\ov\Q}$ is a perfect $m$th power if for some   $\rho \in \Q(\vartheta)$ we have $\vartheta = \rho^m$. 

For a polynomial $f\in \C[X]$, we use $\ov f$ to denote the complex conjugate polynomial, 
that is, the polynomial obtained from $f$ by conjugating all its coefficients.

\begin{theorem}
\label{thm:LRS-PowU} Let $a_i, f_i \in \Z_{\ov\Q} [X]$, $i =1, \ldots, k$, be nonzero  polynomials of degree at most $d$ such that    for any $1 \le r<s <t  \le k$, the pairs of rational functions $(f_s/f_r, f_t/f_r)$ are non-exceptional. We also assume that for $1\le i\ne j\le k$ the polynomials $f_i(X)\ov{f_i}(Y) - f_j(X)\ov{f_j}(Y)$  do not have any factor of the form 
$X^rY^s-u$ or $X^r-uY^s$ where  $u$ is a root of unity. 
 Let the sequence of polynomials $\(F_n\)_{n=0}^\infty$ be defined by~\eqref{eq:Fn}.  
Then  for all but at most $d^2(2k^3/3 + 22k^2)$  
 elements $\alpha \in   \U \setminus \fE_{\vec{a},\vec{f}}$ for any $m \ge 1$
 the set of   $n\in \N$ such that $F_n(\alpha) $ is a perfect $m$th  power either contains an infinite arithmetic progression $m\ell +r$, $\ell  =1,2, \ldots$,
 with some $r \in \{0, \ldots, m-1\}$ or is finite and its size can be effectively bounded.  
 \end{theorem}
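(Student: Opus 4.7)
The plan is to combine the logic of Theorem~\ref{thm:LRS-Zeros Qbar} with a Manin--Mumford-type analysis of torsion points on curves in $\Gm^2$ to reduce to the case of a unique dominant root, and then apply Baker's theorem on linear forms in logarithms, residue class by residue class modulo $m$, to derive the dichotomy.

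First I refine the dominant-root count. Since $\alpha \in \U$ satisfies $\bar\alpha = \alpha^{-1}$, we have $|f_i(\alpha)|^2 = f_i(\alpha)\ov{f_i}(\alpha^{-1})$, so the condition $|f_i(\alpha)| = |f_j(\alpha)|$ for $i \ne j$ is equivalent to the torsion point $(\alpha, \alpha^{-1}) \in \Gm^2$ lying on the curve $C_{ij} \subset \Gm^2$ cut out by $f_i(X)\ov{f_i}(Y) - f_j(X)\ov{f_j}(Y) = 0$. By the explicit Manin--Mumford theorem for $\Gm^2$ (the Beukers--Smyth bound: at most $11 D^2$ torsion points on a curve of degree $D$ in $\Gm^2$ not containing a torsion coset of a proper subtorus), and using that the hypothesis on factors $X^r Y^s - u$ and $X^r - u Y^s$ with $u$ a root of unity is precisely the negation of $C_{ij}$ containing such a torsion coset, we obtain at most $11\cdot(2d)^2 = 44\,d^2$ torsion points on each $C_{ij}$. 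Summing over the $\binom{k}{2}$ unordered pairs contributes at most $22\,d^2 k(k-1) \le 22\,d^2 k^2$ exceptional $\alpha$; combining with the $2d^2 k(k-1)(k-2)/3$ exceptional $\alpha$ at which three moduli coincide---handled exactly as in Theorem~\ref{thm:LRS-Zeros Qbar}---yields the claimed total $d^2(2k^3/3 + 22k^2)$.

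Now fix $\alpha \in \U \setminus \fE_{\vec{a},\vec{f}}$ outside this exceptional set, set $K = \Q(\alpha)$, and after relabeling let $\lambda_1 := f_1(\alpha)$ be the strictly dominant root; write $\mu_i = a_i(\alpha)$ and $\lambda_i = f_i(\alpha)$. For $n = mq+r$ with $r \in \{0,\ldots,m-1\}$ fixed,
\[
F_{mq+r}(\alpha) = \mu_1 \lambda_1^r (\lambda_1^m)^q \bigl(1 + \eta_q^{(r)}\bigr), \qquad \eta_q^{(r)} = \sum_{i=2}^k \frac{\mu_i}{\mu_1}\Bigl(\frac{\lambda_i}{\lambda_1}\Bigr)^{mq+r},
\]
with $|\eta_q^{(r)}| \le C_1 c^q$ for some $c \in (0,1)$ effective in the data. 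The equation $F_{mq+r}(\alpha) = \rho^m$ with $\rho \in K$ becomes $\mu_1\lambda_1^r(1+\eta_q^{(r)}) \in (K^*)^m$. If $\mu_1 \lambda_1^r \notin (K^*)^m$, then $1+\eta_q^{(r)}$ must lie in a fixed non-trivial coset of $(K^*)^m$; since $1+\eta_q^{(r)} \to 1$, Baker's theorem on $m$ logarithms of algebraic numbers (applied with $h(\lambda_i), h(\mu_i) = O(1)$, as $h(\alpha) = 0$ for $\alpha \in \U$) yields an effective lower bound $\exp(-C_2(\log q)^\kappa)$ for the multiplicative distance of $1+\eta_q^{(r)}$ to $(K^*)^m$, contradicting $|\eta_q^{(r)}| \le C_1 c^q$ for $q$ beyond an effective threshold. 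If instead $\mu_1\lambda_1^r = \tau^m$ with $\tau \in K$, we need $\sigma \in K$ with $\sigma^m = 1+\eta_q^{(r)}$; the same Baker argument rules out $\sigma$ non-trivially close to an $m$th root of unity, so $\sigma^m = 1$ and $\eta_q^{(r)} = 0$. This last equation either has effectively finitely many solutions in $q$ (by Baker-type bounds on zeros of non-degenerate recurrences with a unique dominant root, applied to the order $k-1$ sub-recurrence $\eta^{(r)}$), or holds identically; the latter, by the non-degeneracy of $\eta^{(r)}$ that follows from $\alpha \notin \fE_{\vec{a},\vec{f}}$, occurs only when $k=1$ and yields the infinite arithmetic progression $\{m\ell + r: \ell \ge 1\}$ in the solution set.

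The main obstacle is effectiveness in the Baker step: the bound depends polynomially on $[K:\Q]$ and logarithmically on $H(1+\eta_q^{(r)}) = \exp(O(q))$, so the comparison with the exponential decay $|\eta_q^{(r)}| \le c^q$ yields an effective (albeit $\alpha$-dependent) bound on $q$. Summing over the $m$ residue classes and the finitely many choices of the $m$th root $\tau$ gives the claimed effectively bounded finite solution set; the sole mechanism by which the set can be infinite is the trivial degenerate $k=1$ situation, accounted for by the arithmetic-progression alternative in the statement.
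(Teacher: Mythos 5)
Your counting of the exceptional set is exactly the paper's argument: the same reduction of $|f_i(\alpha)|=|f_j(\alpha)|$ for $\alpha\in\U$ to the torsion point $(\alpha,\ov\alpha)$ lying on the curve $f_i(X)\ov{f_i}(Y)-f_j(X)\ov{f_j}(Y)=0$, the same appeal to Lang and to the Beukers--Smyth bound of $44d^2$ cyclotomic points per curve (giving $22d^2k(k-1)<22d^2k^2$), combined with the $2d^2k^3/3$ count from the three-equal-moduli analysis of Theorem~\ref{thm:LRS-Zeros Qbar}. Up to this point your proof and the paper's coincide.

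The second half is where there is a genuine gap. The paper simply invokes Fuchs~\cite[Corollary~2.10]{Fuchs}, which is precisely the structural dichotomy for perfect $m$th powers in a number-field recurrence with a single dominant root; you instead try to reprove it with Baker's method, and the key inequality you assert is not available. If $F_{mq+r}(\alpha)=\rho^m$, then $\rho$ (equivalently your $\sigma$) has Weil height growing linearly in $q$, since $h\(F_n(\alpha)\)\asymp n\,h(\lambda_1)$. Baker-type lower bounds for a nonvanishing linear form $\Lambda=m\log\sigma-\log\mu_1-\cdots$ are of the shape $\exp\(-C\,h(\sigma)\cdots\log B\)$: the \emph{heights} of the algebraic numbers enter (at least) linearly in the exponent, and only the integer coefficients enter logarithmically. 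With $h(\sigma)\gg q$ and the coefficient $m$ fixed, this yields a lower bound of the form $\exp(-Cq\log q)$, not the claimed $\exp\(-C_2(\log q)^\kappa\)$, and it does not contradict the upper bound $|\Lambda|\ll c^q$; the same objection applies to the Liouville-type step ``$\sigma$ cannot be non-trivially close to an $m$th root of unity.'' This is exactly why perfect powers with \emph{fixed} exponent in a recurrence are delicate (compare the difficulty of determining the squares among Fibonacci numbers), and why the alternative in the statement only bounds the \emph{cardinality} of the finite set rather than its elements. A secondary overreach: your claim that the arithmetic-progression alternative can occur only when $k=1$ is not justified by your argument (vanishing of $\eta^{(r)}_q$ for all $q$ is not the only mechanism producing a progression of $m$th powers), and the theorem does not assert it. To repair the proof you should either cite Fuchs's Corollary~2.10 as the paper does, or reproduce its argument (binomial-series approximation of $(1+\eta)^{1/m}$ and reduction to a polynomial--exponential equation), neither of which is a direct linear-forms-in-logarithms estimate of the kind you sketch.
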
  
 
 \begin{remark} 
 We note that by a result of Lang~\cite{Lang}   the set $\U \cap  \fE_{\vec{a},\vec{f}}$ is in fact finite, thus the number of exceptions in Theorem~\ref{thm:LRS-PowU} is finite.
 \end{remark}  
 
\subsection{Greatest common divisors of binary linear recurrence sequences}
\label{sec:gcd}

We note that the finiteness conclusion in Question~\ref{quest:CommZero} would imply that there are finitely many $\alpha\in\ov\Q$ such that 
$$
(X-\alpha) \mid \gcd(F_n(X),G_m(X))
$$
for some $n,m\ge 1$, 
where by the {\it greatest common divisor of two rational functions\/}, we mean the 
greatest common divisor  of their numerators.

Studying the greatest common divisor of sequences of polynomials has been initiated in~\cite{AR} for sequences $F_n=f^n-1$ and $G_m=g^m-1$, $n,m\ge 1$, where $f,g\in\C[X]$, and recently extended in several directions in~\cite{CZ,Lev,LevWan,Ost,PaSh}.

We note that in the case of  simple binary linear recurrence sequences defined over $\C(X)$, under a multiplicative independence condition, a finiteness result follows directly from~\cite[Theorem~1.3]{Ost}. In fact here we consider a more general scenario, which however holds only when the  rational functions involved are defined over $\ov\Q$.

More precisely, for a finitely generated group $\Gamma\subseteq \ov\Q^*$, we define the {\it division group of $\Gamma$} to be the set
$$
\Gamma^\di=\{\alpha\in\ov\Q :~\alpha^n \in \Gamma \textrm{ for some $n\ge 1$}\}.
$$
Then we have  the following result.

We say that $\psi_1,\ldots,\psi_t\in \C(X)$ are
{\it  multiplicatively independent modulo a set  $\cS \subseteq\C$\/} if for
all non-zero vectors $\(h_1, \ldots, h_t\) \in \Z^t$  one has
$$
\psi_1(X)^{h_1} \ldots \psi_t(X)^{h_t} \not \in \cS.
$$ 

\begin{theorem}
\label{thm:gcd prod pow} 
Let  $\Gamma\subseteq \ov\Q^*$ be a finitely generated subgroup. Let $f_1,\ldots,f_r,g_1,\ldots,g_s\in\ov\Q(X)$ be multiplicatively independent modulo  $\Gamma$. 
There exists a polynomial  $h\in\ov\Q[X]$ depending only on $f_i,g_j$ and the generators of 
$\Gamma$  such that 
 that for any nonzero vectors $(m_1,\ldots,m_r)\in\N^r$,  $(n_1,\ldots,n_s)\in\N^s$  and any $u,v \in \Gamma^\di$ one has
$$
\gcd\(\prod_{i=1}^rf_i(X)^{m_i}-u,\prod_{j=1}^sg_j(X)^{n_j}-v \)\mid h.
$$ 
\end{theorem}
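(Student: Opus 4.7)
The plan is to reinterpret a common root $\alpha$ of the two polynomials as a point on an algebraic curve in a multiplicative torus lying in a low-codimension algebraic-subgroup translate, and then apply a Bombieri-Masser-Zannier type finiteness theorem.

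Set $n=r+s$ and consider the rational map
$$
\varphi:\P^1 \dashrightarrow \Gm^n,\qquad \varphi(X)=\bigl(f_1(X),\ldots,f_r(X),g_1(X),\ldots,g_s(X)\bigr),
$$
whose image $\cC\subset\Gm^n$ is an irreducible algebraic curve over $\ov\Q$. A common root $\alpha$ of $\prod_{i=1}^r f_i^{m_i}-u$ and $\prod_{j=1}^s g_j^{n_j}-v$ with $u,v\in\Gamma^\di$ produces a point $P=\varphi(\alpha)\in\cC(\ov\Q)$ satisfying
$$
\prod_{i=1}^{r} P_i^{m_i}=u \mand \prod_{j=1}^{s} P_{r+j}^{n_j}=v.
$$
Since these two monomial relations involve disjoint sets of coordinates and both exponent vectors are nonzero, $P$ lies on a translate by an element of $(\Gamma^\di)^n$ of an algebraic subgroup of $\Gm^n$ of codimension~$2$.

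Next I would verify non-degeneracy: the hypothesis that $f_1,\ldots,f_r,g_1,\ldots,g_s$ are multiplicatively independent modulo $\Gamma$ implies $\cC$ is not contained in any $\Gamma^\di$-translate of a proper connected algebraic subgroup of $\Gm^n$. Otherwise, a non-trivial character $\chi$ would force a monomial identity $\prod_i f_i^{h_i}\prod_j g_j^{h_{r+j}}\equiv\gamma\in\Gamma^\di$; raising to an integer $N\ge1$ with $\gamma^N\in\Gamma$ would contradict the hypothesis. With this transversality, I invoke the Bombieri-Masser-Zannier theorem \cite[Theorem~2]{BMZ} (or the appropriate $\Gamma^\di$-extension, in the spirit of Maurin's theorem) on the intersection of a curve in $\Gm^n$ with the union of all $\Gamma^\di$-translates of codimension-$\ge 2$ algebraic subgroups. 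This yields a finite set $\Sigma\subset\ov\Q$ containing every possible common root~$\alpha$.

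To complete the construction of $h$ I need a uniform bound on the multiplicity of each $\alpha\in\Sigma$ as a root. The multiplicity of $\alpha$ as a root of $\prod_i f_i^{m_i}-u$ equals the smallest $k\ge 1$ with $\sum_i m_i (f_i'/f_i)^{(k-1)}(\alpha)\ne 0$; after reducing (modulo constants) to a maximal subfamily of $\{f_i\}$ that is multiplicatively independent modulo constants, the Wronskian of the logarithmic derivatives is a nonzero rational function, which yields a uniform bound $M_\alpha$ depending only on $\alpha$ and the $f_i$. A symmetric argument bounds the multiplicity on the $g_j$-side, so $h(X)=\prod_{\alpha\in\Sigma}(X-\alpha)^{M_\alpha}$ is the desired polynomial. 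I expect the main obstacle to be pinning down the precise version of the BMZ-type theorem applicable to $\Gamma^\di$-translates of codimension-$2$ subgroups of $\Gm^n$; the multiplicity step is routine once one performs the reduction to a multiplicatively independent subfamily, a reduction needed because the hypothesis permits identities $\prod f_i^{h_i}=c\notin\Gamma^\di$ which do not immediately violate multiplicative independence modulo $\Gamma$.
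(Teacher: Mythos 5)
Your proposal is correct and its overall architecture coincides with the paper's: both reduce the statement to (i) finiteness of the set of possible common roots via an unlikely-intersection theorem for the curve $\(f_1,\ldots,f_r,g_1,\ldots,g_s\)$ in $\Gm^{r+s}$, and (ii) a uniform bound on root multiplicities. The differences are in how the two ingredients are instantiated. For (i), the precise statement you are looking for is not~\cite[Theorem~2]{BMZ} (which concerns intersections with algebraic subgroups, not $\Gamma^\di$-translates) but Maurin's theorem~\cite{Mau}; the paper packages exactly the form you need as Lemma~\ref{lem:multdep-rat}, applied to the two linearly independent exponent vectors $(m_1,\ldots,m_r,0,\ldots,0)$ and $(0,\ldots,0,n_1,\ldots,n_s)$ -- your "codimension two" observation is precisely the linear independence of these vectors, and your non-degeneracy check matches the paper's verification that multiplicative independence modulo $\Gamma$ is preserved after adjoining the generators of $\Gamma$ as extra constant coordinates. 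For (ii), the paper simply quotes Lemma~\ref{lem:ABCrat} (from~\cite[Corollary~2.10]{Ost}), which gives a multiplicity bound $2\min\{\sum\deg f_i,\sum\deg g_j\}$ that is uniform in $\alpha$, the exponents and $u,v$; your Wronskian-of-logarithmic-derivatives argument is a genuinely different and workable route, provided you carry out the reduction you mention to a subfamily multiplicatively independent modulo constants (needed since independence modulo $\Gamma$ does not preclude constant monomials outside $\Gamma^\di$; when the monomial itself is constant, independence modulo $\Gamma$ rules out $\prod f_i^{m_i}=u$, so the gcd is trivial and there is nothing to bound). Your bound $M_\alpha$ depends on $\alpha$ through the order of vanishing of the Wronskian, but since the exceptional set $\Sigma$ is finite this still yields a legitimate $h$; the paper's route is just shorter and gives a single exponent for all roots.
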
   

\begin{remark}
Taking $r=s=2$, $\Gamma=\{1\}$, $m_1=n_1=1$ and $u=v=-1$ in Theorem~\ref{thm:gcd prod pow}, one has the sequences $(F_n)_{n=1}^\infty$, $(G_m)_{n=1}^\infty$  defined by
\begin{align*} 
&F_n(X)=f_1(X)f_2(X)^n+1, \quad n\ge 1,\\
 &G_m(X)=g_1(X)g(_2X)^m+1, \quad m\ge 1.
\end{align*}
In this case the set $\cL(F_n,G_m)$, defined by~\eqref{eq:L}, is finite and its cardinality is uniformly bounded only in terms of the functions $f_1,f_2,g_1,g_2$. In fact, this conclusion holds also when $f_i,g_i\in\C(X)$, $i=1,2$. Indeed, the proof follows  directly from~\cite[Theorem~1.3]{Ost}, which is the same as the proof of Theorem~\ref{thm:gcd prod pow} but instead of using Lemma~\ref{lem:multdep-rat} below, one uses~\cite[Theorem]{BMZ08} (which in turn generalises~\cite[Theorem 2]{BMZ}).
See also~\cite{BiLu}
for some effective results for  rational functions over $\Q$ and $G_m = X^m -1$. 
\end{remark}

As a direct consequence of Theorem~\ref{thm:gcd prod pow} we have the following  result, which we hope to be  of independent interest.

\begin{cor}
\label{cor:gcd poly} 
Let  $\Gamma\subseteq \ov\Q^*$ be a finitely generated subgroup. Let $f_1,\ldots,f_r,g_1,\ldots,g_s\in\ov\Q(X)$ be multiplicatively independent modulo  $\Gamma$. 
Then there exists a polynomial $H\in\ov\Q[X]$ such that for any nonzero vectors $(m_1,\ldots,m_r)\in\N^r$,  $(n_1,\ldots,n_s)\in\N^s$ and  any polynomials $F,G \in\ov\Q[X]$ of degree at most $d\ge 1$ with roots from $\Gamma^\di$, one has
$$
\gcd\(F\(\prod_{i=1}^rf_i(X)^{m_i}\),G\(\prod_{j=1}^sg_j(X)^{n_j}\)\) \mid H.
$$
\end{cor}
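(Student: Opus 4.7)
The strategy is to deduce Corollary~\ref{cor:gcd poly} directly from Theorem~\ref{thm:gcd prod pow} by factoring $F$ and $G$ into linear factors over $\ov\Q$ and then carefully bookkeeping the multiplicities that appear in the compositions. First I would write
$$
F(Y) = c_F\prod_{\alpha\in R_F}(Y-\alpha)^{e_\alpha} \mand G(Y) = c_G\prod_{\beta\in R_G}(Y-\beta)^{\nu_\beta},
$$
where $R_F, R_G\subseteq \Gamma^\di$ are the (finite) sets of distinct roots, so that $|R_F|,|R_G|\le d$ and all multiplicities $e_\alpha,\nu_\beta$ are at most $d$. Setting
$$
P(X)=\prod_{i=1}^r f_i(X)^{m_i} \mand Q(X)=\prod_{j=1}^s g_j(X)^{n_j},
$$
one obtains
$$
F(P(X)) = c_F\prod_{\alpha\in R_F}(P(X)-\alpha)^{e_\alpha}, \qquad G(Q(X)) = c_G\prod_{\beta\in R_G}(Q(X)-\beta)^{\nu_\beta}.
$$

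Next I would analyse the order of vanishing at an arbitrary common root $x_0$ of $F(P)$ and $G(Q)$. Such an $x_0$ determines a unique pair $(\alpha_0,\beta_0)\in R_F\times R_G$ via $\alpha_0=P(x_0)$, $\beta_0=Q(x_0)$, because the factors $P-\alpha$ with $\alpha\neq\alpha_0$ and $Q-\beta$ with $\beta\neq\beta_0$ are nonzero at $x_0$. Writing $v_{x_0}$ for the order of vanishing at $x_0$, this yields
$$
v_{x_0}(F(P)) = e_{\alpha_0}\, v_{x_0}(P-\alpha_0) \mand v_{x_0}(G(Q)) = \nu_{\beta_0}\, v_{x_0}(Q-\beta_0),
$$
and hence
$$
v_{x_0}(\gcd(F(P),G(Q))) \le d\cdot v_{x_0}(\gcd(P-\alpha_0,\,Q-\beta_0)).
$$

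Finally, since $\alpha_0,\beta_0\in\Gamma^\di$, Theorem~\ref{thm:gcd prod pow} applied with $u=\alpha_0$ and $v=\beta_0$ furnishes a polynomial $h\in\ov\Q[X]$ depending only on $f_1,\ldots,f_r,g_1,\ldots,g_s$ and the generators of $\Gamma$, and hence independent of the exponents $(m_i),(n_j)$ and of $\alpha_0,\beta_0\in\Gamma^\di$, such that $\gcd(P-\alpha_0,Q-\beta_0)\mid h$. Combining this uniform divisibility with the local inequality above at every common root of $F(P)$ and $G(Q)$ gives $\gcd(F(P),G(Q))\mid h^d$, and one may take $H=h^d$. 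The corollary is advertised as a direct consequence of Theorem~\ref{thm:gcd prod pow}, so I do not expect any substantial obstacle; the only delicate point is the multiplicity accounting in the compositions $F\circ P$ and $G\circ Q$, which is precisely what forces $H$ to depend on the degree bound $d$ through its exponent.
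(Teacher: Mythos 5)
Your proof is correct and follows essentially the same route as the paper's: factor $F$ and $G$ into linear factors over $\ov\Q$, reduce to the pairwise gcds $\gcd(P-\alpha_0,Q-\beta_0)$ with $\alpha_0,\beta_0\in\Gamma^\di$, and invoke Theorem~\ref{thm:gcd prod pow} uniformly in $u,v$. Your local multiplicity count even gives the slightly sharper choice $H=h^{d}$, where the paper simply takes $H=h^{d^2}$; either suffices.
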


\subsection{Specialisations of sequences of rational functions}

Finally, we show that a sequence of rational functions $F_n(X) \in\C(X)$, $n\ge 0$,
satisfies a linear recurrence relation if and only if this is also true 
for specialisations $F_n(\alpha)$, $n\ge 0$, for a sufficiently ``massive'' set of $\alpha \in \C$.

\begin{theorem}
\label{thm:LRS spec inf} Let $\(F_n\)_{n=0}^{\infty}$ be an infinite sequence of rational functions $F_n\in\C(X)$, $n\ge 0$, and let $K\ge 1$. Assume there exist infinitely many $\alpha\in\C$ such that each $F_n(\alpha)$ is well defined and $\(F_n(\alpha)\)_{n=0}^{\infty}$ is a  linear recurrence of order at most $K$. Then $\(F_n\)_{n=0}^{\infty}$ is a  linear recurrence  sequence over $\C(X)$ of order at most $K$.
\end{theorem}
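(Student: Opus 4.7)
The plan is to reformulate the linear recurrence property as a rank condition on a Hankel matrix, and then to exploit the elementary fact that a nonzero rational function in one variable has only finitely many zeros. First I would recall the classical criterion: a sequence $(u_n)_{n\ge 0}$ with values in a field $\K$ satisfies a linear recurrence of order at most $K$ if and only if every $(K+1)\times(K+1)$ minor
$$
\det(u_{i+j_s})_{0\le i,s\le K}, \qquad 0\le j_0<j_1<\cdots<j_K,
$$
of the infinite Hankel matrix $(u_{i+j})_{i,j\ge 0}$ vanishes in $\K$; equivalently, the first $K+1$ rows of that Hankel matrix are linearly dependent over $\K$.

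Working with $\K=\C(X)$, for each tuple $\vec j=(j_0,\ldots,j_K)$ of distinct nonnegative integers in increasing order I would consider the rational function
$$
D_{\vec j}(X)=\det(F_{i+j_s}(X))_{0\le i,s\le K}\in\C(X).
$$
By hypothesis there are infinitely many $\alpha\in\C$ at which every $F_n$ is defined and $(F_n(\alpha))_{n\ge 0}$ is a linear recurrence of order at most $K$. Applying the Hankel criterion over $\K=\C$, each such $\alpha$ would satisfy $D_{\vec j}(\alpha)=0$. Since a nonzero element of $\C(X)$ admits only finitely many zeros in $\C$, this forces $D_{\vec j}\equiv 0$ in $\C(X)$.

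As this vanishing holds for every $\vec j$, the first $K+1$ rows of the Hankel matrix $(F_{i+j})_{i,j\ge 0}$ must be linearly dependent over $\C(X)$, producing $c_0,\ldots,c_K\in\C(X)$, not all zero, with
$$
\sum_{i=0}^{K}c_i F_{i+n}=0,\qquad n\ge 0.
$$
Letting $K'\le K$ be the largest index with $c_{K'}\ne 0$, division by $c_{K'}$ would yield a genuine linear recurrence of order $K'\le K$ for $(F_n)_{n\ge 0}$ over $\C(X)$, as required. The argument is essentially linear-algebraic and the identity principle for rational functions in a single variable does all the work; no serious obstacle arises, the only point of care being that the statement explicitly guarantees infinitely many $\alpha\in\C$ at which every $F_n$ is simultaneously evaluable.
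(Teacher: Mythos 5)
Your proof is correct and follows essentially the same route as the paper: both reduce the statement to the vanishing of Hankel-type determinants of the $F_n$, observe that each such determinant is a rational function in $X$ vanishing at infinitely many $\alpha$ and hence identically zero, and then invoke the Hankel criterion again over $\C(X)$. The only (cosmetic) difference is that the paper works with the leading principal Kronecker--Hankel determinants $\Delta_h$ for all $h\ge K+1$, whereas you use all $(K+1)\times(K+1)$ minors drawn from the first $K+1$ rows of the infinite Hankel matrix; both are valid characterisations of being a linear recurrence of order at most $K$.
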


Theorem~\ref{thm:LRS spec inf} implies  that  if $\(F_n(\alpha)\)_{n=0}^{\infty}$ is a   linear recurrence  sequence for
 uncountably  many   $\alpha\in\C$ (without any restriction on the order), then $\(F_n\)_{n=0}^{\infty}$ is a  
 linear recurrence  sequence in $\C[X]$. Indeed, since the set of possible orders is countable, 
 at least one order must  occur for infinitely  many $\alpha \in \C$ (in fact uncountably many $\alpha$).
 On the other hand, just a condition of  the infinitude of such $\alpha\in\C$ is not sufficient. 
For example, the sequence 
$$
F_n(X) = X^{2^{n^2}+n} + X^{6^n} + X^{2^{\fl{\sqrt{n}}}} + 3^n, \qquad n = 0,1, \ldots,
$$
does not satisfy any linear recurrent relation (as the degree grows too fast). However it is a   linear recurrence  sequence 
for  specialisations at any  root of unity of order $2^m$, $m =0, 1, \ldots$. 

\section{Preliminaries} 

\subsection{Intersections of level curves of rational functions}

Our approach depends on a generalisation  of a result   of 
Ailon and Rudnick~\cite[Theorem~1]{AR} which is given in~\cite[Theorem~2.2]{PaSh}. 

Recall the definition of  non-exceptional pairs of rational functions, that is, pairs which do not satisfy  relations of the form~\eqref{eq:gBg}.

\begin{lemma}
 \label{lem:level curves}
Let $g_1(X), g_2(X) \in \C(X)$ be complex rational functions of degrees $n_1$ and $n_2$, respectively. Then 
$$
\# \{z \in \C:~\left|g_1(z)\right | =\left| g_2(z)\right | =1\} \le (n_1+n_2)^2,
$$
unless $\(g_1(X), g_2(X)\)$ is exceptional.
\end{lemma}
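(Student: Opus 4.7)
The plan is to convert the transcendental condition $|g_i(z)| = 1$ into an algebraic one by introducing the conjugate rational function $\ov{g_i}(X)$, obtained by conjugating the coefficients of $g_i$. If $g_i = P_i/Q_i$ with $P_i, Q_i \in \C[X]$ of degrees at most $n_i$, then $|g_i(z)|^2 = g_i(z)\,\ov{g_i}(\ov{z})$, so setting $w = \ov{z}$ turns the condition $|g_i(z)|=1$ into
$$
C_i:\ P_i(z)\,\ov{P_i}(w) - Q_i(z)\,\ov{Q_i}(w) = 0,
$$
which cuts out an affine algebraic curve in $\C^2$ of bidegree $(n_i, n_i)$, hence of total degree at most $2n_i$. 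The set in the lemma is in bijection with the subset of $C_1\cap C_2$ lying on the real anti-diagonal $\{w=\ov{z}\}$, so in particular it is bounded by $\#(C_1\cap C_2)$.

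Assuming first that $C_1$ and $C_2$ share no common irreducible component, Bezout's theorem in $\P^2$, applied to the homogenisations of the two defining polynomials of total degrees $2n_1$ and $2n_2$, bounds the intersection by $(2n_1)(2n_2) = 4n_1 n_2$, which by the AM-GM inequality is at most $(n_1+n_2)^2$. This already gives the desired estimate in the generic case, so everything reduces to showing that a common component forces $(g_1,g_2)$ to be exceptional in the sense of~\eqref{eq:gBg}.

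The main obstacle, and the heart of the argument, is this classification of common components. A shared irreducible component of $C_1$ and $C_2$ yields a rational parametrisation $t \mapsto (h(t), \widetilde h(t))$ along which both $g_1\circ h$ and $g_2\circ h$ have modulus one on a real-analytic arc; by analytic continuation this persists on an entire real algebraic curve in the $t$-plane, which after a change of variable may be taken to be a circle. Classical function theory for rational self-maps preserving a circle then forces $g_1\circ h$ and $g_2\circ h$ to be quotients of finite Blaschke products, and taking $h$ of maximal degree realising a common such parametrisation produces a single rational $g$ with $g_1 = Q_1\circ g$ and $g_2 = Q_2\circ g$ for quotients of finite Blaschke products $Q_1, Q_2$. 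This is precisely the exceptional form~\eqref{eq:gBg}, and this structural step is exactly the generalisation of the Ailon--Rudnick theorem supplied by Pakovich and the second author; it is where Blaschke products (rather than just Bezout) become unavoidable.
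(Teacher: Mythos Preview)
The paper does not prove this lemma; it is quoted as \cite[Theorem~2.2]{PaSh}. Your outline reconstructs the strategy of that reference: algebraicise via $(z,w)=(z,\bar z)$, bound by B\'ezout when the curves $C_1$, $C_2$ share no component, and in the remaining case invoke the Blaschke-product classification. The B\'ezout half is clean and yields exactly the stated bound $4n_1n_2\le(n_1+n_2)^2$.

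Your sketch of the common-component case has a gap: you assert that a shared irreducible component admits a rational parametrisation $t\mapsto(h(t),\widetilde h(t))$, but an irreducible plane curve need not have genus zero, so this step is not justified as written. The argument in \cite{PaSh} does not go through such a parametrisation; it works directly with the real level sets $\{z\in\C:|g_i(z)|=1\}$ and uses a structural theorem of Pakovich characterising when two rational functions share a component of their unit-modulus level curves, which is precisely the decomposition~\eqref{eq:gBg}. You do, in the last sentence, correctly flag this structural step as the content of \cite{PaSh}, so your proposal is best read as a sketch of the cited proof rather than an independent one --- correct in architecture, with the one imprecision noted above in the hand-off to the Blaschke classification.
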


As we have mentioned if $g_1$ and $g_2$ are polynomials then the
conclusion of Lemma~\ref{lem:level curves} holds except when 
$$g_1= g^{m_1} \mand   g_2=g^{m_2}$$
for some $g\in \C[X]$ and integers $m_1, m_2 \ge 0$.

\subsection{Intersection of parametric curves with division groups} 

The proof of Theorem~\ref{thm:gcd prod pow} is based on the following result which  is a direct consequence of Maurin's result~\cite[Th{\'e}or{\`e}me~1.2]{Mau};
see also~\cite{BHMZ} for an effective proof of~\cite[Th{\'e}or{\`e}me~1.2]{Mau}.

\begin{lemma}
\label{lem:multdep-rat}
Let  $\Gamma\subseteq \ov\Q^*$ be a finitely generated subgroup. Let $f_1,  \ldots,$ $ f_n \in \ov\Q(X)$ be multiplicatively independent  modulo $\Gamma$.
Then, there are only finitely many $\alpha\in \ov\Q$ such that
there exist two linearly independent vectors $(k_1,\ldots,k_n)$, $(\ell_1,\ldots,\ell_n)\in\Z^n$ such that 
$$
f_1(\alpha)^{k_1}\cdots f_n(\alpha)^{k_n},\ f_1(\alpha)^{\ell_1}\cdots f_n(\alpha)^{\ell_n}\in \Gamma^\di.
$$
\end{lemma}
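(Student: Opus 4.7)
The plan is to encode the hypothesis as a geometric incidence between an auxiliary curve and algebraic subgroups of codimension at least $2$ in a torus, and then invoke Maurin's theorem verbatim. First I would fix generators for $\Gamma$: since $\Gamma$ is finitely generated, write $\Gamma = \mu \cdot \langle \gamma_1, \ldots, \gamma_s\rangle$, where $\mu$ is the (finite) group of roots of unity in $\Gamma$ and $\gamma_1, \ldots, \gamma_s \in \Gamma$ lift a $\Z$-basis of the free quotient $\Gamma/\mu$. With this choice, $u \in \Gamma^\di$ holds precisely when, for some $M \ge 1$ and $(c_1, \ldots, c_s) \in \Z^s$, the element $u^M \prod_j \gamma_j^{-c_j}$ is a root of unity.

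Next I would introduce the auxiliary rational map
$$
\Phi : \ov\Q \dashrightarrow \Gm^{n+s}, \qquad \alpha \longmapsto \bigl(f_1(\alpha), \ldots, f_n(\alpha), \gamma_1, \ldots, \gamma_s\bigr),
$$
and let $\tilde C$ be the Zariski closure of its image, an irreducible curve over $\ov\Q$. The preparatory step is to check that $\tilde C$ is not contained in any proper algebraic subgroup of $\Gm^{n+s}$. Any such subgroup lies inside a codimension-one hypersurface $\prod x_i^{a_i} \prod y_j^{b_j} = 1$ with $(a, b) \ne 0$, which pulled back to $\tilde C$ forces the identity of rational functions $\prod_i f_i^{a_i} = \prod_j \gamma_j^{-b_j}$. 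Since the right-hand side is a constant in $\Gamma$, if $a \neq 0$ this contradicts the multiplicative independence of $f_1, \ldots, f_n$ modulo $\Gamma$; if $a = 0$ then $\prod_j \gamma_j^{b_j} = 1$, and the freeness of the $\gamma_j$ modulo $\mu$ forces $b = 0$, contradicting $(a, b) \ne 0$.

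Now I would translate the hypothesis on $\alpha$. Linearly independent $(k_1, \ldots, k_n)$, $(\ell_1, \ldots, \ell_n) \in \Z^n$ with both products in $\Gamma^\di$ produce, via the first step, positive integers $M_1, M_2$ and vectors $c^{(1)}, c^{(2)} \in \Z^s$ such that $\prod_i f_i(\alpha)^{k_i M_1} \prod_j \gamma_j^{-c^{(1)}_j}$ and $\prod_i f_i(\alpha)^{\ell_i M_2} \prod_j \gamma_j^{-c^{(2)}_j}$ both lie in $\mu_\infty$. This is equivalent to saying that $\Phi(\alpha)$ lies in the common kernel of suitable integer powers of two characters $\chi_1, \chi_2 : \Gm^{n+s} \to \Gm$ with exponent vectors $(k M_1, -c^{(1)})$ and $(\ell M_2, -c^{(2)})$ in $\Z^{n+s}$. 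Since $(k_i)$ and $(\ell_i)$ are linearly independent in $\Z^n$, their scaled versions remain linearly independent after appending any $s$ further coordinates, so $\chi_1$ and $\chi_2$ are linearly independent and their joint vanishing locus is an algebraic subgroup of $\Gm^{n+s}$ of codimension at least $2$.

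The finish is then immediate: Maurin's theorem \cite[Th{\'e}or{\`e}me~1.2]{Mau} applied to $\tilde C$ ensures that the intersection of $\tilde C$ with the union of all algebraic subgroups of codimension at least two is finite, and since $\Phi$ has finite fibres, only finitely many $\alpha$ can qualify. The main obstacle is really the encoding step — packaging membership in the typically infinitely generated group $\Gamma^\di$ as a single torsion condition for a character on the enlarged torus, and checking that the linear independence of $(k_i)$, $(\ell_i)$ in $\Z^n$ genuinely lifts to linear independence of $\chi_1, \chi_2$ in $\Z^{n+s}$, for this is precisely what promotes the incidence from codimension $1$ to codimension $\ge 2$ and makes Maurin's theorem applicable.
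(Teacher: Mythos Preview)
Your proposal is correct and follows essentially the same route as the paper: embed the situation into $\Gm^{n+s}$ via $\alpha \mapsto (f_1(\alpha),\ldots,f_n(\alpha),\gamma_1,\ldots,\gamma_s)$, verify that the coordinate functions are multiplicatively independent so the image curve is not contained in a proper algebraic subgroup, translate the two $\Gamma^\di$-membership conditions into two linearly independent character relations on the image point, and conclude by Maurin's theorem. The paper's proof is terser---it asserts multiplicative independence of $f_1,\ldots,f_n,\gamma_1,\ldots,\gamma_r$ directly and writes the relations as equalities to $1$ rather than as torsion conditions---while you handle the torsion part of $\Gamma$ more explicitly; but the substance is identical.
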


\begin{proof} 
The proof follows directly from a result of Maurin~\cite[Th{\'e}\-or{\`e}me~1.2]{Mau} 
(see also~\cite[Theorem (Maurin)]{BHMZ}).
Indeed, let $\gamma_1,\ldots,\gamma_r \in \Gamma$ be the generators of the group $\Gamma$,
and thus, they are multiplicatively independent elements. We define the parametric curve
$$
\cC=\{(f_1(\alpha),\ldots,f_n(\alpha),\gamma_1,\ldots,\gamma_r) :\,  \alpha \in \ov\Q\}\subset \Gm^{n+r}.
$$

For an element $\alpha \in \overline{\Q}$,  assume that there exist two linearly independent vectors $(k_1,\ldots,k_n)$, $(\ell_1,\ldots,\ell_n)\in\Z^n$ such that both
$$
f_1(\alpha)^{k_1}\cdots f_n(\alpha)^{k_n},\ f_1(\alpha)^{\ell_1}\cdots f_n(\alpha)^{\ell_n}\in \Gamma^\di.
$$ 
Then, this leads to two multiplicative relations:
\begin{align*}
& f_1(\alpha)^{sk_1}\cdots f_n(\alpha)^{sk_n}\gamma_1^{k_{n+1}}\cdots \gamma_r^{k_{n+r}} = 1,  \\
& f_1(\alpha)^{t\ell_1}\cdots f_n(\alpha)^{t\ell_n}\gamma_1^{\ell_{n+1}}\cdots \gamma_r^{\ell_{n+r}} = 1.
\end{align*}
for some integer vectors $(k_{n+1},\ldots,k_{n+r})$, $(\ell_{n+1},\ldots,\ell_{n+r})\in\Z^{n+r}$ and some positive integers $s,t$.

Since $f_1,\ldots,f_n$ are multiplicatively independent modulo $\Gamma$, we know that the functions $f_1,\ldots,f_n,\gamma_1,\ldots,\gamma_r$ are multiplicatively independent.
We also note that the two vectors $(sk_1,\ldots, sk_n, k_{n+1}, \ldots, k_{n+r})$, 
$(t\ell_1,\ldots,t\ell_n, \ell_{n+1}, \ldots \ell_{n+r})$ remain linearly independent.
Then, the conclusion follows directly from~\cite[Th{\'e}or{\`e}me~1.2]{Mau}.
\end{proof}

\subsection{Zeros of linear recurrence sequences with at most two dominant roots}

We say that a root $\lambda$ of a polynomial $\psi(Z) \in \C[Z]$ is dominant 
if $|\lambda| \ge |\rho|$ for any other root $\rho$ of $\psi$. Clearly $\psi$ may have 
up to $\deg \psi$ dominant roots. We are mostly interested in the case of one or two dominant
roots.  

We note the following  result which represents a simplified combination 
 of two results  (for $\nu=1$ and $\nu=2$) of Sha~\cite[Theorems~1.1 and~1.2]{Sha}. 

\begin{lemma}
 \label{lem: 1-2 dom. root} 
Let $\psi(X) \in \ov \Q[X]$ be a monic square-free polynomial of degree $k$ and with $\nu \le 2$ dominant roots. If $\nu =2$, we also impose that the ratio of the two dominant roots is not a root of unity. 
Then for any linear recurrence sequence $(u_n)_{n=0}^\infty$  given by 
$$
u_n = \sum_{i=1}^k \alpha_i \lambda_i^n , \qquad n \ge 0,
$$
with characteristic polynomial $\psi$ having roots $\lambda_i$, $i=1,\ldots,k$, if   
$$
u_n  = 0
$$
then
$$
n \le  \exp\(C_1 D^4 \(\max_{i=1, \ldots, k} h\(\alpha_i\)+1\)\), 
$$
where $C_1$ is an effective constant which depends only on $k$, $D=[\K:\Q]$ is the degree  of the smallest 
Galois field $\K$ containing the coefficients of $\psi$ and the initial values $u_0, \ldots, u_{k-1}$. 
\end{lemma}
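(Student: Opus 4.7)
My plan is to reduce the result directly to Sha's two theorems~\cite[Theorems~1.1 and~1.2]{Sha}, which respectively handle the cases $\nu = 1$ and $\nu = 2$, and then to reconcile their hypotheses and conclusions with the uniform formulation stated here. Since $\psi$ is monic and square-free, the sequence $(u_n)$ is automatically simple, so both of Sha's theorems apply without additional preparation.

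First I would treat $\nu = 1$ via~\cite[Theorem~1.1]{Sha}, which produces an effective bound of the form $n \le \exp(c_1 D^4 (H+1))$, where $H = \max_i h(\alpha_i)$ and $c_1$ depends only on~$k$. For $\nu = 2$, the non-resonance condition imposed in our statement, namely that the ratio of the two dominant roots is not a root of unity, is exactly the hypothesis required by~\cite[Theorem~1.2]{Sha}, and one obtains a bound of the same qualitative shape with a constant $c_2 = c_2(k)$. Taking $C_1 = \max\{c_1, c_2\}$ then yields a single uniform bound valid in both cases. A small bookkeeping step is needed to match the two descriptions of the Galois field~$\K$: Sha's statements refer to the Galois closure of the field generated by the $\lambda_i$ and the $\alpha_i$, while our statement uses the Galois closure of the field generated by the coefficients of $\psi$ and the initial values $u_0, \ldots, u_{k-1}$. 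These agree because the coefficients of $\psi$ and the $\lambda_i$ generate the same Galois extension of~$\Q$, and the Vandermonde system
$$u_j = \sum_{i=1}^k \alpha_i \lambda_i^j, \qquad j = 0, \ldots, k-1,$$
is invertible (the $\lambda_i$ being distinct), so each $\alpha_i$ lies in the field generated by the $\lambda_i$ and the $u_j$.

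I expect the only delicate point to be confirming that the explicit bounds in~\cite{Sha} can be repackaged in the uniform shape $\exp(C_1 D^4 (H+1))$ declared here. This is, however, bookkeeping rather than a genuine new Diophantine step: both theorems of~\cite{Sha} are already of essentially this form, the hard work having been done there via linear forms in logarithms, and no further input is required beyond invoking them.
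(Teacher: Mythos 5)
Your proposal follows exactly the route the paper takes: Lemma~\ref{lem: 1-2 dom. root} is presented there without a separate proof, precisely as a ``simplified combination'' of~\cite[Theorems~1.1 and~1.2]{Sha} for the cases $\nu=1$ and $\nu=2$, with the non-root-of-unity condition on the ratio of the two dominant roots imported as the hypothesis of the second theorem. Your additional bookkeeping (taking the maximum of the two constants, and identifying the field generated by the coefficients of $\psi$ and the initial values with the one generated by the $\lambda_i$ and $\alpha_i$ via the Vandermonde system) is correct and fills in the details the paper leaves implicit.
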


\subsection{Zeros of linear recurrence sequences in families}

We now recall a special case of a  result of Amoroso,  Masser and  Zannier~\cite[Theorem~1.5]{AMZ}. 

\begin{lemma}
 \label{lem:AMZ}
 Let $f_i \in \ov\Q(X)$, $i =1, \ldots, k$, be nonzero rational functions such that $f_s/f_r$ is non-constant 
for any  $1 \le r<s  \le  k$. There exists an  effectively computable constant $C_2$, which depends on $f_1, \ldots, f_k$ such that if
for any $b_1, \ldots, b_k \in \ov\Q$ not all zero, any $n \ge C_2$  and any $\alpha \in \ov \Q\setminus \fE_{\vec{a},\vec{f}}$, one has 
$$
\sum_{i=1}^k b_i  f_i(\alpha)^n = 0,
$$
then 
$$
h(\alpha) \le \frac{k\max\{h(b_1), \ldots,h(b_k) \}}{n}+ C_2.
$$ 
\end{lemma}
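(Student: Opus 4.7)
Since $\alpha\notin\fE_{\vec{a},\vec{f}}$ ensures $f_1(\alpha)\ne 0$, dividing the given relation by $f_1(\alpha)^n$ reduces it to
$$
b_1+\sum_{i=2}^{k}b_i\, g_i(\alpha)^n=0,
\qquad
g_i:=f_i/f_1,
$$
where each $g_i\in\ov\Q(X)$ is non-constant by hypothesis. Setting $\mathbf{Q}(\alpha)=\bigl(g_2(\alpha),\ldots,g_k(\alpha)\bigr)\in\Gm^{k-1}(\ov\Q)$, the relation says that the multiple $[n]\mathbf{Q}(\alpha)=\bigl(g_2(\alpha)^n,\ldots,g_k(\alpha)^n\bigr)$ lies on the affine hyperplane $H_{\mathbf{b}}:\, b_1+b_2 x_2+\cdots+b_k x_k=0$ in $\ov\Q^{k-1}$.

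The strategy is a height-transfer between $\alpha$ and $[n]\mathbf{Q}(\alpha)$. For the lower bound, since the parametrisation $\Phi:\alpha\mapsto\mathbf{Q}(\alpha)$ is non-degenerate (no coordinate is constant), standard functoriality of the Weil height gives $h(\mathbf{Q}(\alpha))\ge c_0\, h(\alpha)-O(1)$ for some positive $c_0$ depending only on the $g_i$; combined with the fact that the isogeny $[n]$ on $\Gm^{k-1}$ scales the Weil height by $n$ up to $O(1)$, this yields
$$
h\bigl([n]\mathbf{Q}(\alpha)\bigr)\ge n\, c_0\, h(\alpha)-O(n).
$$
For the matching upper bound, I would combine the height of $H_{\mathbf{b}}$ (which is bounded by $\max_i h(b_i)+O(1)$) with an intersection estimate for the curve $[n]C\subset\Gm^{k-1}$, where $C$ is the image of $\Phi$. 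The goal is to show that $h\bigl([n]\mathbf{Q}(\alpha)\bigr)\le k\,\max_i h(b_i)+O(1)$ with an implied constant independent of both $n$ and $\mathbf{b}$; dividing by $n$ and absorbing the $O(1/n)$ term into the final constant then yields $h(\alpha)\le k\max_i h(b_i)/n+C_2$, the factor $k$ arising from the number of terms in the linear form cutting out $H_{\mathbf{b}}$.

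The main obstacle is the \emph{uniform-in-$n$} upper bound described above. A naive Bezout-type estimate for $[n]C\cap H_{\mathbf{b}}$ carries an error term of order $\deg([n]C)$, which grows with $n$ and is fatal. One must exploit the specific multiplicative structure of $[n]$ on the torus: it preserves the Weil height linearly in $n$, and, crucially, a quantitative Bogomolov-type inequality on $\Gm^{k-1}$ (in the spirit of Amoroso--David and Amoroso--Zannier) provides a lower bound on the essential minimum of $C$ outside torsion cosets, which transfers through $[n]$ to a uniform bound on $[n]C$. This Bogomolov-type input is the deep ingredient and is the source of the effectivity of the constant $C_2$; the hypothesis that each ratio $g_i=f_i/f_1$ is non-constant is exactly what prevents $C$ from lying in a translate of a proper subtorus and thereby makes the Bogomolov bound applicable.
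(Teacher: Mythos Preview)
The paper does not actually prove this lemma: it is quoted as a special case of \cite[Theorem~1.5]{AMZ}, with two brief remarks on translating the formulation --- that the projective height $h(b_1:\ldots:b_k)$ appearing in \cite{AMZ} is bounded above by $\max_i h(b_i)$, and that the hypothesis here that \emph{every} ratio $f_s/f_r$ is non-constant allows one to pass to a shortest vanishing subsum and thereby meet the ``no proper vanishing subsum'' hypothesis of \cite{AMZ}. So there is nothing to compare at the level of proofs: you are sketching a proof of the cited theorem itself, which the paper simply imports.

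Your plan correctly names the deep input (effective toric Bogomolov/Lehmer bounds of Amoroso--David type, which is indeed what drives \cite{AMZ}), and the reduction to $g_i=f_i/f_1$ together with the lower bound $h\bigl([n]\mathbf Q(\alpha)\bigr)\ge n\,c_0\,h(\alpha)-O(n)$ is fine. The gap is the upper-bound step. For $k\ge 3$, membership in $H_{\mathbf b}$ is a single linear condition and does \emph{not} give $h\bigl([n]\mathbf Q(\alpha)\bigr)\le k\max_i h(b_i)+O(1)$: points on a fixed hyperplane in $\Gm^{k-1}$ have unbounded height, and adding the constraint $[n]\mathbf Q(\alpha)\in[n]C$ does not help without a further argument. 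Moreover, a Bogomolov lower bound on the essential minimum of $C$ is a \emph{lower} bound on $h(\mathbf Q(\alpha))$; it points the same way as the inequality you already have and cannot supply the missing upper bound. (Relatedly, ``each $g_i$ non-constant'' does not by itself prevent $C$ from lying in a proper subtorus coset --- that requires multiplicative independence of the $g_i$ modulo constants.) In \cite{AMZ} the ingredients are assembled differently: the linear relation is exploited place by place, under the no-vanishing-subsum hypothesis, to control the heights of the ratios $f_i(\alpha)/f_j(\alpha)$ directly, and the Lehmer/Bogomolov input enters on the small-height side. You have the right pieces but not yet the right wiring.
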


We remark that in~\cite[Theorem~1.5]{AMZ} the bound is given in terms of the height 
of the projective vector $(b_1: \ldots: b_k) \in \P^{k-1}\(\ov\Q\)$ which is bounded by the  maximum
used in  Lemma~\ref{lem:AMZ}. 
Furthermore, in~\cite[Theorem~1.5]{AMZ} only one ratio is 
assumed to be non-constant, however there is an additional request of non-vanishing of subsums 
in the sum of Lemma~\ref{lem:AMZ} (under our condition on  $f_s/f_r$ one can simply consider 
the shortest vanishing subsum). 

We also note that the effectiveness is not explicitly 
stated in~\cite[Theorem~1.5]{AMZ} however it is discussed after the formulation of~\cite[Theorem~1.5]{AMZ}.

\begin{cor}
\label{cor:AMZ}
Let $a_i,f_i \in \ov\Q(X)$, $i =1, \ldots, k$, be nonzero rational functions such that $f_s/f_r$ is non-constant 
for any  $1 \le r<s  \le  k$. There exists an  effectively computable constant $C_3$, which depends on $a_1,\ldots,a_k,f_1, \ldots, f_k$ such that if
for any $n\ge C_3$ and any $\alpha \in \ov \Q$ one has 
$$
\sum_{i=1}^k a_i(\alpha)  f_i(\alpha)^n = 0,
$$
then 
$$
h(\alpha) \le C_3.
$$
\end{cor}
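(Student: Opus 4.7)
The plan is to specialise Lemma~\ref{lem:AMZ} by taking $b_i = a_i(\alpha)$, turn the resulting inequality into a self-referential one in $h(\alpha)$, and then solve for $h(\alpha)$.

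First, I isolate the $\alpha$ for which this specialisation is illegitimate or for which Lemma~\ref{lem:AMZ} does not apply. Namely, let $\cS \subseteq \ov\Q$ be the union of the poles of all $a_i, f_i$ and the set of common zeros of $a_1, \ldots, a_k$ (both are finite, since the $a_i$ are not all identically zero and the $a_i, f_i$ are rational functions); absorb the maximum height of elements of $\cS$ into $C_3$. Next, note that the set $\fE_{\vec a,\vec f}$ already has bounded height (as remarked in the paper after its definition using~\cite[Theorem~3.11]{Silv}), so elements of $\fE_{\vec a,\vec f}$ trivially satisfy the desired bound once we enlarge $C_3$ accordingly. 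It therefore suffices to handle $\alpha \in \ov\Q \setminus (\cS \cup \fE_{\vec a,\vec f})$, for which the tuple $(a_1(\alpha), \ldots, a_k(\alpha))$ is a well-defined, not-identically-zero vector in $\ov\Q^k$.

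Now I apply Lemma~\ref{lem:AMZ} with $b_i = a_i(\alpha)$. The standard height estimate for specialisations of rational functions (see~\cite{Silv} or~\cite{BoGu}) gives
$$
h\(a_i(\alpha)\) \le D h(\alpha) + C^*,  \qquad i = 1, \ldots, k,
$$
where $D = \max_i \deg a_i$ and $C^*$ is an effective constant depending only on $a_1, \ldots, a_k$. Thus for $n \ge C_2$, Lemma~\ref{lem:AMZ} yields
$$
h(\alpha) \le \frac{k\(D h(\alpha) + C^*\)}{n} + C_2.
$$

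Finally, I solve this inequality. Restricting to $n \ge \max\{C_2, 2kD\}$, the coefficient $1 - kD/n$ on the left hand side is at least $1/2$, which gives
$$
h(\alpha) \le \frac{2kC^*}{n} + 2C_2 \le \frac{C^*}{D} + 2C_2.
$$
Taking $C_3$ to be the maximum of this bound, of $\max\{C_2, 2kD\}$, of $\sup_{\beta \in \fE_{\vec a,\vec f}} h(\beta)$, and of $\max_{\beta \in \cS} h(\beta)$ completes the proof. There is no serious obstacle here: the content is the observation that substituting $b_i = a_i(\alpha)$ into Lemma~\ref{lem:AMZ} produces a self-referential bound in which the $h(\alpha)$ on the right is damped by the factor $k/n$, so choosing $n$ sufficiently large (still effectively in terms of $a_1, f_1, \ldots, a_k, f_k$) absorbs it into the left hand side.
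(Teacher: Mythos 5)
Your proposal is correct and follows essentially the same route as the paper: both specialise Lemma~\ref{lem:AMZ} with $b_i = a_i(\alpha)$, invoke the standard height bound $h(a_i(\alpha)) \le \deg(a_i)\, h(\alpha) + O(1)$ from~\cite[Theorem~3.11]{Silv}, and absorb the resulting $k\deg(a_i) h(\alpha)/n$ term into the left-hand side by taking $n \ge 2k\max_i \deg a_i$, with the set $\fE_{\vec a,\vec f}$ handled separately via its bounded height. The only cosmetic difference is your extra set $\cS$ of poles and common zeros (largely redundant given the definition of $\fE_{\vec a,\vec f}$, and the final division by $D$ silently assumes $D \ge 1$), but these do not affect correctness.
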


\begin{proof} If  $\alpha \in  \fE_{\vec{a},\vec{f}}$, as we have mentioned, this follows from~\cite[Theorem~3.11]{Silv}.

We can now assume that $\alpha \in \ov \Q\setminus \fE_{\vec{a},\vec{f}}$.
We can also assume that $n > C_2$, where $C_2$ is as in Lemma~\ref{lem:AMZ}, 
since otherwise $\alpha$ is a root of an equation of bounded degree and height.

We now recall that for any $\alpha \in \ov \Q$ and $a\in \ov\Q(X)$ 
we have 
$$
h\(a(\alpha)\) \le h\(\alpha\)  \deg a + C_4,
$$
where $C_4$ is an effectively computable constant that depends only on $a$, see~\cite[Theorem~3.11]{Silv}. 
Hence for 
$$
n \ge 2 k  \max\{\deg a_1, \ldots, \deg a_k\} 
$$
the bound of Lemma~\ref{lem:AMZ} becomes
\begin{align*}
h(\alpha) & \le \frac{k \max\{h\((a_1(\alpha)\), \ldots, h\(a_k(\alpha)\)\} }{n}+ C_2\\
 & \le \frac{k  \max\{\deg a_1, \ldots, \deg a_k\}  h\(\alpha\) + k C_4}{n}+ C_2\\
  & \le \frac{1}{2} h\(\alpha\) +  C_4/2+ C_2. 
 \end{align*}
 Hence $h(\alpha)$ is bounded only in terms of  $a_1, f_1, \ldots, a_k, f_k$. 
Choosing 
$$
C_3= 2k\max\{\deg a_1, \ldots, \deg a_k, C_2\},
$$
the result follows. 
\end{proof}

\subsection{Polynomial ABC theorem}  
We need the following generalisation~\cite[Theorem~12.4.4]{BoGu} of the ABC Theorem for polynomials proved first by Stothers~\cite{St}, and then independently by Mason~\cite{Mas0} and Silverman~\cite{Si84}. This is a special case of a more general result for $S$-units in function fields due to Voloch~\cite{Vo} and  then also  Brownawell and Masser~\cite{BrMa}.

\begin{lemma}
\label{lem:abc}
Let $g_1,\ldots,g_m\in\ov\Q[X]$, $m\ge 3$, be such that $g_1,\ldots,g_m$ have no common zero in $\ov\Q$. Assume that
$$
g_1(X)+\cdots+g_m(X)=0
$$
and also that no proper subsum 
$$
\sum_{i \in \cI} g_i(X), \qquad \text{with} \quad \cI \subseteq \{1, \ldots, m\}, \ 1 \le \#\cI< m, 
$$
 vanishes identically. Then
$$
\max_{i=1,\ldots,m} \deg g_i\le \frac{(m-1)(m-2)}{2} \max\left\{\deg\mathrm{rad}\(\prod_{i=1}^m g_i\)-1,0\right\}.
$$
\end{lemma}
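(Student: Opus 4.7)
\medskip

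The statement is the generalized Mason--Stothers / ABC theorem for polynomials; the cited reference is Bombieri--Gubler~\cite[Theorem~12.4.4]{BoGu}, and the natural plan is to reproduce the classical Wronskian proof of that theorem. First I would reduce the assertion to an $S$-unit relation in the function field $\ov\Q(X)$: divide the identity $g_1+\cdots+g_m=0$ through by $g_m$, obtaining rational functions $f_i=g_i/g_m$ for $i=1,\ldots,m-1$ that satisfy
$$
1+f_1+\cdots+f_{m-1}=0,
$$
and whose only possible zeros or poles lie in the set $S\subseteq\P^1(\ov\Q)$ consisting of the roots of $\mathrm{rad}\(\prod_{i=1}^m g_i\)$ together with the point at infinity, so that $\#S=\deg\mathrm{rad}\(\prod g_i\)+1$. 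The non-vanishing-subsum hypothesis transfers verbatim to the new relation.

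Next I would upgrade the no-proper-subsum-vanishing hypothesis to actual $\C$-linear independence of $f_1,\ldots,f_{m-1}$. If some nontrivial linear dependence $\sum c_i f_i=0$ existed, one could form a $\C$-linear combination with the relation $\sum f_i=-1$ chosen to annihilate at least one coefficient; iterating if necessary yields a strictly shorter vanishing combination with $0/1$ coefficients among the $g_i$, contradicting the hypothesis. In particular the Wronskian
$$
W=W(f_1,\ldots,f_{m-1})=\det\(f_i^{(j-1)}\)_{1\le i,j\le m-1}
$$
is not identically zero, and by construction its zero and pole divisor is supported entirely in $S$.

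The core of the proof is then the local analysis at each $v\in S$. Away from $S$ the functions $f_i$ are regular units, so $W$ is regular there. At a place $v\in S$, writing the $v$-adic expansions of the $f_i$ and performing elementary column operations on the matrix $(f_i^{(j-1)})$, one obtains the standard bound
$$
\mathrm{ord}_v(W)\ge \sum_{i=1}^{m-1}\mathrm{ord}_v(f_i)-\binom{m-1}{2},
$$
which, after summing over $v\in S$ via the product formula for $\ov\Q(X)$ (so that $\sum_v \mathrm{ord}_v(f_i)=0$), forces
$$
\deg W \le \binom{m-1}{2}\(\#S-1\)=\frac{(m-1)(m-2)}{2}\(\deg\mathrm{rad}\(\prod g_i\)-1\).
$$
On the other hand, differentiating the identity $\sum f_i=-1$ shows that $W$ coincides, up to sign, with any of the ``deleted'' Wronskians $W(f_1,\ldots,\widehat{f_k},\ldots,f_{m-1},1)$, and one bounds $\max_i\deg f_i$ from above by $\deg W$ using a straightforward estimate on the pole order of $W$ at $\infty$. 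Multiplying back by $g_m$ and using symmetry of the argument (apply the same to each index $m$ in turn) produces the claimed inequality $\max_i\deg g_i\le\frac{(m-1)(m-2)}{2}\max\{\deg\mathrm{rad}(\prod g_i)-1,0\}$.

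The principal obstacle is the local Wronskian estimate in the third paragraph: one must keep careful track of the contribution of each derivative order $0,1,\ldots,m-2$ to $\mathrm{ord}_v(W)$ and verify that the combinatorial constant coming out is exactly $\binom{m-1}{2}$, not something weaker. The special case ``$\max=0$'' handles the possibility that $\mathrm{rad}\(\prod g_i\)$ is constant, in which case every $g_i$ is forced to be a constant and the inequality is trivial.
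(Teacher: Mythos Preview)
The paper does not give its own proof of this lemma: it is quoted as~\cite[Theorem~12.4.4]{BoGu} (the polynomial ABC theorem in its Brownawell--Masser/Voloch form), so there is no in-paper argument to compare against. Your outline follows the classical Wronskian route, which is indeed the approach of the cited references.

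There is, however, a real gap in your second paragraph. The implication ``no proper subsum of the $g_i$ vanishes $\Longrightarrow$ $f_1,\ldots,f_{m-1}$ are $\C$-linearly independent'' is false. Take $m=4$ and
\[
g_1=1,\qquad g_2=X,\qquad g_3=2,\qquad g_4=-3-X.
\]
None of the fourteen proper nonempty subsums vanishes, yet $2g_1-g_3=0$, so $f_3=2f_1$ and $W(f_1,f_2,f_3)\equiv 0$. Your ``iterate to reach $0/1$ coefficients'' manoeuvre cannot manufacture a vanishing subsum here: the space of linear relations among $g_1,\ldots,g_4$ is two-dimensional, spanned by $\sum g_i=0$ and $2g_1-g_3=0$, and no nonzero element of that space has all coefficients in $\{0,1\}$ with at least one coefficient equal to $0$. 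The correct fix, as in~\cite{BrMa} and~\cite{BoGu}, is to pass to a maximal linearly independent subset of the $g_i$, run the Wronskian estimate on that subset, and then use the no-subsum hypothesis to ensure each remaining $g_j$ is still controlled; this is where the constant $\binom{m-1}{2}$ rather than a smaller binomial enters. The rest of your sketch (the local order estimate and the product-formula summation) is along the right lines, though the final step ``$\deg W$ bounds $\max_i\deg f_i$'' is less immediate than you suggest and would need to be made explicit.
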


We also need the following result on the multiplicity of roots of certain rational functions, see~\cite[Corollary 2.10]{Ost}. For a rational function  $h\in\C(T)$, we denote by $\Mult(h)$ the largest multiplicity of the zeros of $h$ and by $Z(h)$ the set of zeros of $h$ in $\C$, 
respectively. 

\begin{lemma}
\label{lem:ABCrat}
Let  $h_i=f_i/g_i$, $f_i,g_i\in\C[T]$, $i=1,\ldots,n$, with 
$$
Z(f_1\cdots f_m)\cap Z(g_1\cdots g_m)=\emptyset.
$$ 
Then, for all integers $n_1,\ldots,n_{\ell}\ge 0$ and $a\in\C^*$, we have 
$$
\Mult\(h_1^{n_1}\cdots h_{\ell}^{n_{m}}-a\)\le \sum_{i=1}^{m} \(\deg f_i+\deg g_i\).
$$
\end{lemma}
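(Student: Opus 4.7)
The plan rests on the standard Hankel criterion: a sequence $(u_n)_{n=0}^\infty$ taking values in a field satisfies a linear recurrence of order at most~$K$ if and only if the shifted vectors $v_i=(u_{i+j})_{j\ge 0}$, $i=0,1,\ldots,K$, are linearly dependent over that field, equivalently, every $(K+1)\times (K+1)$ minor of the infinite matrix
$$
M=(u_{i+j})_{0\le i\le K,\ j\ge 0}
$$
vanishes. Indeed, any nontrivial relation $\sum_{i=0}^K c_i v_i=0$ reads $\sum_{i=0}^K c_i u_{n+i}=0$ for every $n\ge 0$, which is exactly a recurrence of order at most~$K$; conversely such a recurrence yields such a relation.

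I would apply this criterion to the matrix $M(X)=(F_{i+j}(X))_{0\le i\le K,\ j\ge 0}$ viewed over the field $\C(X)$. Each $(K+1)\times(K+1)$ minor $\Delta(X)$ of $M(X)$ is a signed sum of products of $K+1$ entries $F_\ell(X)$, so it is a rational function whose set of poles is contained in the union of poles of the finitely many $F_\ell$ it involves. By hypothesis there is an infinite set $S\subseteq \C$ such that for every $\alpha\in S$ the value $F_n(\alpha)$ is defined for all $n\ge 0$ and the sequence $(F_n(\alpha))_{n\ge 0}$ is a linear recurrence over $\C$ of order at most $K$. In particular no $\alpha\in S$ is a pole of any $F_\ell$ appearing in $\Delta$, and applying the Hankel criterion over $\C$ at $\alpha$ gives $\Delta(\alpha)=0$. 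Hence $\Delta(X)$ is a rational function with no poles on the infinite set $S$ that vanishes at every point of $S$, forcing $\Delta\equiv 0$ in $\C(X)$.

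Since every $(K+1)\times(K+1)$ minor of $M(X)$ vanishes identically, the rows $v_0(X),\ldots,v_K(X)$ are $\C(X)$-linearly dependent. Therefore there exist $c_0,\ldots,c_K\in\C(X)$, not all zero, with
$$
\sum_{i=0}^{K}c_i(X)\,F_{n+i}(X)=0 \qquad \text{for every } n\ge 0,
$$
which is a linear recurrence over $\C(X)$ of order at most $K$, as required.

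The only delicate step is the passage from ``vanishing on the infinite set $S$'' to ``identically zero in $\C(X)$'', which succeeds precisely because the hypothesis guarantees that $S$ avoids the (finitely many) poles of every $F_\ell$ appearing in each minor. The rest is an essentially purely linear-algebraic manipulation via the Hankel criterion (Kronecker's theorem); no earlier lemma of the paper needs to be invoked.
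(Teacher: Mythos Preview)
Your proposal does not address the stated lemma at all. Lemma~\ref{lem:ABCrat} is a bound on the multiplicity of any zero of $h_1^{n_1}\cdots h_m^{n_m}-a$ for rational functions $h_i=f_i/g_i$ whose numerators and denominators have disjoint zero sets; its proof (in the paper merely cited from~\cite[Corollary~2.10]{Ost}) is an application of the polynomial ABC theorem to the identity
\[
\prod_i f_i^{n_i} - a\prod_i g_i^{n_i} = \Bigl(\prod_i g_i^{n_i}\Bigr)\Bigl(h_1^{n_1}\cdots h_m^{n_m}-a\Bigr),
\]
which bounds the multiplicity of any root of the left-hand side in terms of the radical of the product, hence by $\sum_i(\deg f_i+\deg g_i)$.

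What you have written is instead a proof of Theorem~\ref{thm:LRS spec inf}: the Hankel/Kronecker determinant criterion, specialisation at infinitely many $\alpha$, and the conclusion that $(F_n)$ satisfies a recurrence over $\C(X)$. That argument is essentially the paper's own proof of Theorem~\ref{thm:LRS spec inf} (with the minor variant that you use all $(K{+}1)\times(K{+}1)$ minors rather than just the principal ones $\Delta_h$), but it has nothing to do with Lemma~\ref{lem:ABCrat}. There is no overlap in content: no Hankel matrix, no sequence $(F_n)$, and no notion of linear recurrence appears in the statement you were asked to prove. You need to discard this and instead argue via the ABC theorem for polynomials as indicated above.
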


\subsection{Zeros of linear recurrence sequences in function fields} 
It is well known that the Skolem Problem is settled in the case of linear recurrence sequences over function fields. An effective (but not explicit) bound on the largest zero in such a sequence is given in~\cite[Corollary~3.1]{FuPe}. We present it in a simplified form as needed for our setting.

\begin{lemma}
\label{lem:fuchs}
Let $a_i, f_i \in \ov\Q[X]$, $i =1, \ldots, k$,  be   such that $f_i/f_j\not\in \ov\Q^*$ for all $1\le i< j\le k$. Let  polynomials $F_n$, $n\ge 1$, be defined by~\eqref{eq:Fn}. Then there exists an effectively computable constant $C_5$ depending on $a_i,f_i$, $i=1,\ldots,k$, such that if $F_n=0$ then $n < C_5$.
\end{lemma}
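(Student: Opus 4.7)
The plan is to deduce Lemma~\ref{lem:fuchs} directly from Corollary~\ref{cor:AMZ} via a one-line specialisation argument: identical vanishing of the polynomial $F_n\in\ov\Q[X]$ is a vastly stronger condition than vanishing of a single specialisation $F_n(\alpha)$, and Corollary~\ref{cor:AMZ} already bounds the height of any $\alpha\in\ov\Q$ for which $F_n(\alpha)=0$ once $n$ is sufficiently large. Since $F_n$ being identically zero would force $F_n(\alpha)=0$ for \emph{all} $\alpha$, including elements of arbitrarily large height, one immediately gets a contradiction for $n$ above the threshold provided by Corollary~\ref{cor:AMZ}.

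I would first check that the hypotheses of Corollary~\ref{cor:AMZ} are met here. Since each $f_i$ is a nonzero polynomial, the assumption $f_i/f_j\notin\ov\Q^*$ for $1\le i<j\le k$ is equivalent to $f_i/f_j$ being non-constant, which is precisely what Corollary~\ref{cor:AMZ} requires. Let $C_3$ be the effectively computable constant supplied by that corollary, depending only on $a_1,f_1,\ldots,a_k,f_k$. I would then suppose, for contradiction, that some $n\ge C_3$ satisfies $F_n\equiv 0$ in $\ov\Q[X]$. This forces $F_n(\alpha)=\sum_{i=1}^k a_i(\alpha) f_i(\alpha)^n=0$ for every $\alpha\in\ov\Q$. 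Choosing any positive rational integer $N$ with $\log N>C_3$ and setting $\alpha=N$ produces a specialisation with Weil height $h(\alpha)=\log N>C_3$, contradicting Corollary~\ref{cor:AMZ}. Hence no such $n$ exists, and the lemma holds with $C_5=C_3$, which is effectively computable.

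There is essentially no obstacle on this route: the proof is a short reduction and the effectivity of $C_5$ is inherited verbatim from that of $C_3$. A self-contained alternative would apply the polynomial ABC theorem (Lemma~\ref{lem:abc}) directly to the relation $\sum_{i=1}^k a_i f_i^n=0$, after the customary reductions to a vanishing sum with no proper vanishing subsum and with any common polynomial factor divided out. Comparing the resulting left-hand degrees $n\deg f_i+\deg a_i$ with the ABC bound $\frac{(k-1)(k-2)}{2}\bigl(\deg\mathrm{rad}\bigl(\prod_j a_j f_j\bigr)-1\bigr)$ then yields a bound on $n$, using that at least one $f_i$ is non-constant (a consequence of $f_i/f_j\notin\ov\Q^*$ for $k\ge 2$). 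The main subtlety in that alternative would be controlling how proper vanishing subsums interact with the hypothesis on the ratios, which is why the specialisation route above is clearly preferable since it reuses machinery already assembled in the paper.
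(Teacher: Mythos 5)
The paper gives no proof of this lemma: it is imported directly from Fuchs and Peth\"o~\cite[Corollary~3.1]{FuPe}, whose argument runs through the Brownawell--Masser inequality for unit equations in function fields. Your primary route is therefore genuinely different. As a piece of logic it is valid: modus tollens applied to Corollary~\ref{cor:AMZ} (if $F_n\equiv 0$ for some $n\ge C_3$, every $\alpha\in\ov\Q$ would have height at most $C_3$) does give the claim, it creates no cycle in the paper's dependency graph (Corollary~\ref{cor:AMZ} rests on the external result~\cite[Theorem~1.5]{AMZ}, not on Lemma~\ref{lem:fuchs}), and your hypothesis check is correct since for nonzero polynomials $f_i/f_j\notin\ov\Q^*$ is the same as $f_i/f_j$ being non-constant. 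The caveat you should be aware of is that this derivation pushes all of the content into the AMZ black box: Corollary~\ref{cor:AMZ} can only be true \emph{because} identical vanishing of $\sum_{i=1}^k a_i f_i^n$ is impossible for large $n$ --- otherwise its conclusion would assert that all of $\ov\Q$ has bounded height --- and the proof of~\cite[Theorem~1.5]{AMZ} secures precisely this point via function-field unit-equation estimates of the same kind that underlie~\cite{FuPe}. So your one-line proof is a legitimate but somewhat self-referential repackaging rather than an independent argument. Your sketched alternative via the polynomial ABC theorem (Lemma~\ref{lem:abc}) is the self-contained and more robust route, and is essentially the Fuchs--Peth\H{o} proof: pass to a shortest identically vanishing subsum, which has at least two terms and hence (since at most one $f_i$ can be constant when all ratios are non-constant) contains a term of degree at least $n$, remove common factors, and compare $n\deg f_i+\deg a_i$ with the radical bound, which is independent of $n$. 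Either route is acceptable; if you use the first, state explicitly that you are relying on Corollary~\ref{cor:AMZ} holding without any non-identical-vanishing hypothesis.
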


We also have the following lower bound  on the number of distinct zeros of $F_n(X)$ in~\eqref{eq:Fn}.

\begin{lemma}  
\label{lem:NmbrZerosFn}
Let $a_i, f_i \in \ov\Q[X]$, $i =1, \ldots, k$,  be  as in Theorem~\ref{thm:LRS-Zeros Qbar} and such that  $\gcd(a_1f_1, \ldots, a_kf_k)=1$. 
 Then there exists an effective positive constant $C_6$ depending only on $a_i,f_i$, $i=1,\ldots,k$, such that for any $n\ge C_6$,  the polynomial 
 $F_n$ defined by~\eqref{eq:Fn} has at least 
 $$
 \frac{2}{k(k-1)} \max_{i=1,\ldots, n} \{\deg a_i +n\deg f_i\} -2dk
 $$  
  distinct roots. 
  \end{lemma}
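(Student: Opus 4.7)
The plan is to apply the polynomial ABC theorem (Lemma~\ref{lem:abc}) to the identity
\[
-F_n(X) + \sum_{i=1}^k a_i(X) f_i(X)^n \;=\; 0,
\]
regarded as a vanishing sum of $m = k+1$ polynomials $g_0 = -F_n$ and $g_i = a_i f_i^n$ for $1 \le i \le k$. First I verify the two hypotheses of Lemma~\ref{lem:abc}. The assumption $\gcd(a_1 f_1, \ldots, a_k f_k) = 1$, together with the fact that $f_i^n$ has the same set of roots as $f_i$, implies that $a_1 f_1^n, \ldots, a_k f_k^n$, and hence also $g_0, g_1, \ldots, g_k$, have no common zero in $\ov\Q$. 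For the absence of vanishing proper subsums, observe that any proper subsum of $\{g_0, g_1, \ldots, g_k\}$ is, possibly after using $g_0 = -\sum_{i=1}^k a_i f_i^n$ to move $g_0$ across, of the form $\sum_{i \in I} a_i(X) f_i(X)^n$ with $I$ a proper nonempty subset of $\{1, \ldots, k\}$. Applying Lemma~\ref{lem:fuchs} to each such sub-recurrence (the non-degeneracy hypothesis $f_i/f_j \notin \ov\Q^*$ is inherited on any subset) yields finitely many effective bounds on $n$ beyond which no such subsum can vanish identically; taking $C_6$ to exceed all of them, both ABC hypotheses hold for $n \ge C_6$.

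Lemma~\ref{lem:abc} then gives
\[
\max_{0 \le i \le k} \deg g_i \;\le\; \frac{k(k-1)}{2} \left( \deg \mathrm{rad}\!\left( F_n \prod_{i=1}^k a_i f_i^n \right) - 1 \right).
\]
The left-hand side is at least $\max_{1 \le i \le k} (\deg a_i + n \deg f_i)$, achieved by a specific $g_i$. On the right, $\deg \mathrm{rad}\bigl( F_n \prod_i a_i f_i^n \bigr)$ equals the number of distinct complex roots of this product; since $f_i^n$ and $f_i$ share the same roots, this is at most $\#Z(F_n) + \sum_{i=1}^k (\deg a_i + \deg f_i) \le \#Z(F_n) + 2dk$, where $Z(F_n)$ denotes the set of distinct roots of $F_n$ in $\ov\Q$. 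Rearranging the resulting inequality gives
\[
\#Z(F_n) \;\ge\; \frac{2}{k(k-1)} \max_{1 \le i \le k} (\deg a_i + n \deg f_i) \;-\; 2dk + 1,
\]
which is slightly stronger than the claimed bound.

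The only non-routine step in this scheme is the verification that, for $n$ large enough, no proper subsum of the ABC equation vanishes identically; this is precisely what Lemma~\ref{lem:fuchs} delivers, applied in turn to each of the finitely many candidate subrecurrences indexed by proper nonempty $I \subsetneq \{1,\ldots,k\}$. Everything else is direct book-keeping with degrees and radicals, and the constant $C_6$ is effective because both Lemma~\ref{lem:fuchs} and Lemma~\ref{lem:abc} are.
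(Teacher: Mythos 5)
Your proof is correct and follows essentially the same route as the paper: apply the polynomial ABC theorem (Lemma~\ref{lem:abc}) to the identity $F_n - \sum_i a_i f_i^n = 0$, use Lemma~\ref{lem:fuchs} to rule out identically vanishing proper subsums for $n$ large (reducing any such subsum to one of the form $\sum_{i\in\cI} a_i f_i^n$ with $\cI\subsetneq\{1,\dots,k\}$ nonempty), and then bound $\deg\mathrm{rad}$ of the product by the number of distinct roots of $F_n$ plus $2dk$. Your verification of the no-common-zero hypothesis and your explicit uniformization of the Fuchs--Peth\H{o} constant over all proper subsets are slightly more careful than the paper's write-up, but the argument is the same.
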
  
  
  \begin{proof}
  Let $C_5$ be as in Lemma~\ref{lem:fuchs}. Then, for any   $n\ge C_5$ we know that $F_n\ne 0$. We consider now the equation
\begin{equation}
\label{eq:abc}
F_n(X)-\sum_{i=1}^k a_i(X)f_i(X)^n =0.
\end{equation}
We also note that for $n\ge C_5$  no subsum of the terms from the set 
$$
\left\{F_n(X), a_1(X)f_1(X)^n, \ldots, a_k(X)f_k(X)^n\right\}
$$ 
vanishes identically. Indeed, if this happens, then there is
 a  proper subset $\cI\subseteq \{1,\ldots,k\}$ such that 
 $$\sum_{i\in \cI} a_i(X)f_i(X)^n=0.
 $$ 
 Since $n\ge C_5$, this  contradicts Lemma~\ref{lem:fuchs}.

Let $\cS_n$ be the set of all distinct zeros of $F_n$. We can now apply Lemma~\ref{lem:abc} to~\eqref{eq:abc} to conclude that 
$$
\max_{i=1,\ldots, k} \{\deg a_i +n\deg f_i\}\le \frac{k(k-1)}{2}\(\# \cS_n +\sum_{i=1}^k (\deg a_i +\deg f_i)\).
$$
From here we obtain
$$
\# \cS_n \ge \frac{2\max_{i=1,\ldots, k} \{\deg a_i +n\deg f_i\}}{k(k-1)} -2dk,
$$
which concludes the proof (with $C_6 = C_5$).
\end{proof} 

\begin{remark}  We note that under a more restrictive condition that the $k$ products $a_1f_1, \ldots, a_kf_k$ 
are pairwise relatively prime, one can take $C_6=1$ in Lemma~\ref{lem:NmbrZerosFn}.  Furthermore, 
under the more restrictive condition that $2k$ polynomials $a_1, f_1, \ldots, a_k, f_k$  are pairwise  relatively prime, one can also use the result of Brindza~\cite[Theorem~1]{Bri} to show that the polynomial $F_n$ has at least $n/(k-1)-(d+1)k$ distinct roots. 
\end{remark}

We also remark that a tight lower bound for the degree of $F_n$   has recently been given in~\cite[Corollary~2]{FuHe}
(in fact  for arbitrary non-degenerate  linear recurrences including non-simple ones). 
      
\subsection{Characterisation of linear recurrence  sequences}
 
To test whether  an arbitrary sequence  $ (v_n)_{n=1}^\infty$ of elements of a field $\F$
is a linear recurrence  sequence, we recall the following well-known result 
which is based on the vanishing of  the
{\sl Kronecker--Hankel
determinants\/} 
\begin{equation}
\label{eq:KH-Det}
\Delta_{h}=\det \(v_{i+j}\)_{0\le i,j \le h-1}.
\end{equation} 

More precisely, by~\cite[Theorem~8.75]{LiNi} (see also~\cite[Theorem~1.6]{EvdPSW} and~\cite[Lemma~5, Chapter~V]{Kob}
for variations) we have: 

\begin{lemma}\label{lem: HK-Det LRS}
A sequence  $(v_n)_{n=1}^\infty$ of elements of a field $\F$
is a   linear recurrence  sequence of order $k$ if and only if for 
the determinants~\eqref{eq:KH-Det} we have $\Delta_{h} =0$ for all  $h\ge k+1$. 
\end{lemma}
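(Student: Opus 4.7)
The plan is to invoke Lemma~\ref{lem: HK-Det LRS} twice: once over the ground field $\C$, using the hypothesis to produce many zeros of the relevant Hankel determinants, and once over the function field $\C(X)$, to conclude that the original sequence satisfies a linear recurrence.

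Concretely, for each integer $h \ge K+1$, I would form the Kronecker--Hankel determinant
\begin{equation*}
\Delta_h(X) = \det\bigl(F_{i+j}(X)\bigr)_{0 \le i,j \le h-1} \in \C(X).
\end{equation*}
This is a polynomial expression in the finitely many rational functions $F_0(X), F_1(X), \ldots, F_{2h-2}(X)$, and in particular it is a rational function on $\C$ whose poles lie among the finitely many poles of $F_0, \ldots, F_{2h-2}$. The key observation is that at any point $\alpha \in \C$ where each of $F_0(\alpha), \ldots, F_{2h-2}(\alpha)$ is well defined, the evaluation $\Delta_h(\alpha)$ coincides with the corresponding Hankel determinant of the specialised sequence $\bigl(F_n(\alpha)\bigr)_{n=0}^{\infty}$.

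Next, I would apply the hypothesis: there exist infinitely many $\alpha \in \C$ at which every $F_n(\alpha)$ is defined and at which the specialised sequence is a linear recurrence of order at most $K$. For each such $\alpha$, Lemma~\ref{lem: HK-Det LRS} over $\F = \C$ forces the Hankel determinant of the specialised sequence of size $h \ge K+1$ to vanish, i.e., $\Delta_h(\alpha) = 0$. Thus $\Delta_h(X) \in \C(X)$ vanishes at infinitely many points of $\C$, and since a nonzero rational function has only finitely many zeros, we conclude $\Delta_h(X) \equiv 0$ in $\C(X)$ for every $h \ge K+1$.

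Finally, I would apply Lemma~\ref{lem: HK-Det LRS} in the reverse direction with $\F = \C(X)$: the identical vanishing of all Hankel determinants of order $\ge K+1$ is precisely the condition for $\bigl(F_n\bigr)_{n=0}^{\infty}$ to be a linear recurrence sequence of order at most $K$ over $\C(X)$. This completes the argument. I do not anticipate a substantive obstacle here; the only small point to verify carefully is the compatibility between the Hankel determinant formed symbolically in $\C(X)$ and its evaluation at the specialisation points, which is immediate once one notes that specialisation commutes with the determinant when all entries are defined.
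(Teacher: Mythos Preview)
Your proposal does not prove the stated Lemma~\ref{lem: HK-Det LRS} at all: you \emph{invoke} the lemma (twice) rather than establish it. What you have written is a proof of Theorem~\ref{thm:LRS spec inf}, the specialisation result for sequences of rational functions, and not of the Kronecker--Hankel determinant criterion itself. The paper, for its part, does not prove Lemma~\ref{lem: HK-Det LRS} either; it simply quotes it from \cite[Theorem~8.75]{LiNi} (with variants in \cite{EvdPSW,Kob}). So if the task was genuinely to supply a proof of the lemma, neither you nor the paper provides one, and you would need to give the standard linear-algebra argument relating the rank of the infinite Hankel matrix to the minimal recurrence order.

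If, on the other hand, the intended target was Theorem~\ref{thm:LRS spec inf} (which your write-up plainly addresses), then your argument is correct and coincides with the paper's own proof essentially line for line: form $\Delta_h(X)\in\C(X)$, use Lemma~\ref{lem: HK-Det LRS} over $\C$ at the infinitely many good specialisations to get $\Delta_h(\alpha)=0$ for all $h\ge K+1$, conclude $\Delta_h(X)\equiv 0$ since a nonzero rational function has finitely many zeros, and then apply Lemma~\ref{lem: HK-Det LRS} over $\C(X)$. There is no substantive difference in approach.
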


\section{Proofs of  Results Towards the Skolem problem}

\subsection{Proof of Theorem~\ref{thm:LRS-Zeros Qbar} }

Multiplying by common  denominators of $a_i(X)$ and   of $f_i(X)$, $i =1, \ldots, k$, we  
can assume that $a_i(X), f_i(X) \in \ov\Q[X]$, $i =1, \ldots, k$. 

Let $\alpha\in\ov\Q \setminus \fE_{\vec{a},\vec{f}}$  be such that~\eqref{eq:zero LRS} holds. 

We now consider the case when we have at least three dominant roots, that is, there exist distinct  integers 
$1 \le r< s < t \le k$ such that 
$$
|f_r(\alpha)|=|f_s(\alpha)|=|f_t(\alpha)|,
$$
or equivalently, 
$$
\frac{|f_s(\alpha)|}{|f_r(\alpha)|}=\frac{|f_t(\alpha)|}{|f_r(\alpha)|}=1.
$$
Since by hypothesis, $(f_s/f_r,f_t/f_r)$ is a non-exceptional rational function, from Lemma~\ref{lem:level curves} we  see that for each of the $k(k-1)(k-2)/6$ possible choices of  the triple $(r,s,t)$ there are at most $4d^2$ such  $\alpha \in  \ov\Q$. 

Hence in total we have excluded at most 
\begin{equation}
\label{eq:Exc Elem}
4d^2 k(k-1)(k-2)/6 = 2d^2 k(k-1)(k-2)/3  
\end{equation} 
elements $\alpha\in\ov\Q \setminus \fE_{\vec{a},\vec{f}}$. 

We thus assume that we have at most two dominant roots. 
In this case we apply  Lemma~\ref{lem: 1-2 dom. root} to any element $\alpha\in\ov\Q\setminus \fE_{\vec{a},\vec{f}}$ and derive that 
$$
n \le \exp\(C_7 D^4_\alpha (h(\alpha)+1)\), 
$$
for an effectively computable constant $C_7$,  
 which depends only on  $a_1,\ldots,a_k,f_1, \ldots, f_k$. 
We now observe that for $n\le C_3$, 
where $C_3$ is as in Corollary~\ref{cor:AMZ}, the result is trivial.
Otherwise  Corollary~\ref{cor:AMZ} applies and we obtain the desired result.

\subsection{Proof of Corollary~\ref{cor:LRS-Zeros AD}}  

By Theorem~\ref{thm:LRS-Zeros Qbar}  we only need to estimate the number $R(D)$ of roots of unity of degree at most $D$ over $\Q$. 
Clearly
$$
R(D) = \sum_{m:~\varphi(m) \le D} \varphi(m)  \le D  \sum_{m:~\varphi(m) \le D} 1,
$$
where $\varphi(m)$ is the Euler function. Using the bound
$$
\sum_{m:~\varphi(m) \le D} 1 \le 23 D
$$ 
of Dubickas and Sha~\cite[Lemma~4.1]{DuSha},  we see that 
\begin{equation}
\label{eq:RD}
R(D) \le 23 D^2
\end{equation} 
(using partial summation one can certainly obtain a tighter bound). 
 Now, since each of $k(k-1)/2$ ratios $f_i(\alpha)/f_j(\alpha)$ can be a root of unity of degree at most $D$,
 there are at most 
 $$
 dR(D) \frac{k(k-1)}{2} < 12 dD^2k^2
 $$ 
 such $\alpha\in\cA_D$, which we need to exclude.

Taking into account that we also need to exclude  at most other $2d^2 k(k-1)(k-2)/3$ elements  $\alpha \in  \cA_D$ as in Theorem~\ref{thm:LRS-Zeros Qbar}, 
and at most $2dk$ zeros  of  $a_if_i$, $i=1,\ldots,k$, and thus  at most $$
2d^2 k(k-1)(k-2)/3+2dk \le 2d^2k^3/3
$$
elements (elementary calculus show that the last inequality holds for any $d \ge 1$ and $k\ge 2$).
Hence we conclude the proof.

\subsection{Proof of Corollary~\ref{cor:IrredFact}} 
 
Let $\K$ be a number field with $[\K:\Q]=D$ such that $a_i,f_i\in\K[X]$, $i=1,\ldots,k$. 

Let $\alpha \not \in \fE_{\vec{a},\vec{f}}$ be a root of $F_n$ for some $n\ge 1$. Let $G$ be its minimal polynomial over $\K$, which implies that $G\mid F_n$. 
Let $D_G=\deg G=[\K(\alpha):\K]$ and let $\M_\alpha$  be the smallest Galois extension of $\Q$ which includes $\K(\alpha)$.

For any $\sigma\in \Gal(\K(\alpha)/\K)$, conjugating the relation $F_n(\alpha)=0$ gives us
\begin{equation}
\label{eq:alpha Fn}
\sum_{i=1}^k a_i(\sigma(\alpha)) f_i(\sigma(\alpha))^n=0.
\end{equation}

We may assume that $D_G>2d^2k^3/3$. Indeed, if $D_G\le 2d^2k^3/3$, then
$$
[\K(\alpha):\Q]\le DD_G \le \frac{2}{3}  Dd^2k^3, 
$$ 
and since by Corollary~\ref{cor:AMZ}, $h(\alpha)$ is bounded from the above by a constant depending only on  $a_1, f_1, \ldots, a_k, f_k$, by Northcott's Theorem there are finitely many such $\alpha$. Thus we can  exclude the irreducible factors of $F_n$, $n\ge 1$, corresponding to these finitely many elements.

Now, since there are $D_G>2d^2k^3/3$ distinct elements $\sigma(\alpha)$ satisfying~\eqref{eq:alpha Fn}, and taking into account the hypothesis on the polynomials $f_i$, we can apply Theorem~\ref{thm:LRS-Zeros Qbar} to conclude that there exists $\sigma\in\Gal(\K(\alpha)/\K)$ such that~\eqref{eq:alpha Fn} holds and for which 
$$
n\le \exp\(C_8[\M_\alpha:\Q]^4\)  
$$ 
for some constant $C_8$ which depends only on  $a_1, f_1, \ldots, a_k, f_k$. This concludes the proof.

\subsection{Proof of Corollary~\ref{cor:split field}}
Let the set $\cE$ be as in Corollary~\ref{cor:IrredFact}.  Clearly, if $F_n$ has a root outside of  the set $\cE \cup   \fE_{\vec{a},\vec{f}}$ the result is 
instant from Corollary~\ref{cor:IrredFact}.

Otherwise,  since $\cE$ is a finite set and so are the sets of $\alpha\in \ov \Q$ with  
$f_i(\alpha)=0$ or $a_i(\alpha)=0$, for some $1 \le i \le k$, adjusting the constant $c_0$ we can make 
the desired result valid for these values of $\alpha$ as well.

Hence it remains to consider the case when the ratio  $f_i(\alpha)/f_j(\alpha)$  is a root of unity
for some $1 \le i <j\le k$.  We recall that by Lemma~\ref{lem:NmbrZerosFn}  $F_n$ has at least $2n/k(k-1) - C_9$ distinct roots
$\alpha$, for some constant $C_9$ which depends only on  $a_1, f_1, \ldots, a_k, f_k$. 
By our assumption, for each of them at least one ratio  $f_i(\alpha)/f_j(\alpha)$  is a root of unity for some $1 \le i <j\le k$.
For each $\gamma \in \ov \Q$ the equation $f_i(\alpha)/f_j(\alpha) = \gamma$  has at most $d$ solutions
$\alpha \in \ov \Q$.  
Hence the  set of roots of unity among the ratios $f_i(\alpha)/f_j(\alpha)$  form a set $\cU_\vec{f}$ of 
at least     
$$\# \cU_\vec{f} \ge 2n/dk(k-1) - C_9/d
$$ 
 distinct  roots of unity $\rho$ for some $1 \le i <j\le k$.

 Recall the bound~\eqref{eq:RD} on the number of roots of unity of degree at most $D$. 
Hence at least one of  the above  roots of unity 
 is of degree at least 
 $$  \sqrt{ \frac{1}{23}\# \cU_\vec{f}} \ge\sqrt{ \frac{2n}{23dk(k-1)} - C_9/23d}. 
 $$ 
 and thus so is $\alpha$. Hence in this case we have a much stronger bound than claimed. 
 
\section{Proof of Theorem~\ref{thm:LRS-PowU} on Perfect Powers in Specialisations at Roots of Unity} 

Let $\K$ be the  field of definition  of the polynomials $a_i, f_i$. Thus, $a_i,f_i\in\Z_\K[X]$, where $\Z_\K$ is the ring of integers of $\K$.

As we have shown in the proof of Theorem~\ref{thm:LRS-Zeros Qbar},  see~\eqref{eq:Exc Elem}, for all but at most $2d^2k^3/3$ 
elements $\alpha \in   \U \setminus \fE_{\vec{a},\vec{f}}$, the sequence
$\(F_n(\alpha)\)_{n=1}^{\infty}$ has at most two dominant roots.

Let us assume first that for such an element $\alpha \in   \U \setminus \fE_{\vec{a},\vec{f}}$, the sequence $\(F_n(\alpha)\)_{n=1}^{\infty}$ has two dominant roots, that is, for some $1\le i\ne j\le k$, $|f_i(\alpha)|=|f_j(\alpha)|$. This implies that 
\begin{equation}
\label{eq:eq rou}
f_i(\alpha)\ov{f_i}(\ov\alpha) =  f_i(\alpha)\ov{ f_i(\alpha)} =f_j(\alpha)\ov {f_j(\alpha) } =f_j(\alpha)\ov{f_j}(\ov\alpha),
\end{equation}
where $\ov\alpha$ is the complex conjugate of $\alpha$, which is again a root of unity. 

Since by our assumption, 
$$f_i(X)\ov{f_i}(Y) - f_j(X)\ov{f_j}(Y)\in\K[X,Y], \qquad 1\le i\ne j\le k,
$$ 
do not have any factor of the form $X^rY^s-u$ or $X^r-uY^s$ with  $u \in \U$, 
 by a  result of Lang~\cite{Lang}  we know that there are finitely many solutions $(\alpha,\beta)$ in roots of unity (and thus also of the form $(\alpha,\ov\alpha)$) to the equation~\eqref{eq:eq rou}. Moreover, by~\cite[Section~4.1]{BS}, the equation 
$$f_i(X)\ov{f_i}(Y) - f_j(X)\ov{f_j}(Y)=0
 $$ has at most  $44d^2$ solutions $(\alpha,\beta)$ in roots of unity. Thus, there are at most 
 $22d^2k(k-1)< 22d^2k^2$ such elements $\alpha \in   \U \setminus \fE_{\vec{a},\vec{f}}$ such that the sequence $\(F_n(\alpha)\)_{n=1}^{\infty}$ has  two dominant roots.

Thus, for all but $d^2(2k^3/3 + 22k^2)$ elements $\alpha \in   \U \setminus \fE_{\vec{a},\vec{f}}$, the sequence $\(F_n(\alpha)\)_{n=1}^{\infty}$ has only one dominant root. Let such an element $\alpha \in   \U \setminus \fE_{\vec{a},\vec{f}}$. 
We now apply  the result of Fuchs~\cite[Corollary~2.10]{Fuchs} describing the structure of the set of perfect powers in 
linear recurrent sequences over number fields (one easily verifies that for $\alpha \in   \U \setminus \fE_{\vec{a},\vec{f}}$ 
all necessary conditions of~\cite[Corollary~2.10]{Fuchs}  are satisfied).  This concludes the proof.

\section{Proofs of  Results on Common Zeros}
 
\subsection{Proof of Theorem~\ref{thm:gcd prod pow}} 
For given $\m=(m_1,\ldots,m_r)$,   $\n=(n_1,\ldots,n_s)$,  and $u,v\in\Gamma^\di$, we define
$$
\cD_{\m,\n,u,v}(X)=\gcd\(\prod_{i=1}^rf_i(X)^{m_i}-u,\prod_{j=1}^sg_j(X)^{n_j}-v \).
$$ 
We recall that by the greatest common divisor of two rational functions we mean the greatest monic common divisor of their numerators. In particular, $\cD_{\m,\n,u,v}\in\ov\Q[X]$.

The proof follows the same approach as in the proof of~\cite[Theorem~1.3]{Ost}. Indeed, since the rational functions $f_1,\ldots,f_r,g_1,\ldots,g_s$ are multiplicatively independent modulo $\Gamma$, by Lemma~\ref{lem:multdep-rat}, applied with $n=r+s$ and
\begin{align*} 
&(k_1,\ldots,k_n)= (m_1,\ldots,m_r, 0, \ldots, 0),\\
&(\ell_1,\ldots,\ell_n) = (0, \ldots, 0, n_1,\ldots,n_s), 
\end{align*} 
  there are finitely many $\alpha\in \ov\Q$ such that $\cD_{\m,\n,u,v}(\alpha)=0$ for some non-zero integer vectors $\m,\n$ as above and $u,v\in\Gamma^\di$. 
  We denote this finite set, depending only on $f_i,g_j$, by $\cS$.

To construct the polynomial $h$ we need to control also the multiplicity of the roots of the polynomials $\cD_{\m,\n,u,v}$. This is also readily given by Lemma~\ref{lem:ABCrat}, which implies that 
$$
\Mult(\cD_{\m,\n,u,v}) \le 2\min\left\{\sum_{i=1}^r\deg f_i ,\, \sum_{j=1}^s \deg g_j\right\}.
$$
Thus, we can define the polynomial $h\in\ov\Q[X]$ by
$$
h(X)=\prod_{\gamma\in\cS} (X-\gamma)^{2\min\left\{\sum_{i=1}^r\deg f_i ,\, \sum_{j=1}^s \deg g_j\right\}},
$$
which concludes the proof.
 
\subsection{Proof of Corollary~\ref{cor:gcd poly}} 
Let $F,G\in\ov\Q[X]$ be of degree at most $d\ge 1$ with all roots in $\Gamma^\di$. We reduce the problem to looking at each greatest common divisor
$$
\gcd\(\prod_{i=1}^rf_i(X)^{m_i}-\gamma_1,\prod_{j=1}^sg_j(X)^{n_j}-\gamma_2\)
$$
with roots $\gamma_1$ and $\gamma_2$ of $F$ and $G$ respectively
and $(m_1,\ldots,m_r)\in\N^r\setminus\{{\mathbf 0}\}$,  $(n_1,\ldots,n_s)\in\N^s\setminus\{{\mathbf 0}\}$.

By Theorem~\ref{thm:gcd prod pow} there exists a polynomial $h\in\ov\Q[X]$ that depends only on $f_i,g_j$ and the generators of $\Gamma$ such that for all 
$(m_1,\ldots,m_r)\in\N^r\setminus\{{\mathbf 0}\}$,  $(n_1,\ldots,n_s)\in\N^s\setminus\{{\mathbf 0}\}$ and $\gamma_1,\gamma_2\in\Gamma^\di$ one has
$$
\gcd\(\prod_{i=1}^r f_i(X)^{m_i}-\gamma_1,\prod_{j=1}^sg_j(X)^{m_j}-\gamma_2\)\mid h.
$$ 
Since both $F$ and $G$ have at most $d$ roots in $\Gamma^\di$, we conclude the proof choosing $H=h^{d^2}$. 
 
\section{Proof of Theorem~\ref{thm:LRS spec inf}}   

For each $\alpha\in\C$ we consider the sequence $\(\Delta_h(\alpha)\)_{h=0}^{\infty}$ of Kronecker-Hankel determinants 
$$
\Delta_h(\alpha)=\det\(F_{i+j}(\alpha)\)_{0\le i,j\le h-1}, \quad h=0,1,\ldots.
$$

Assume there exist infinitely many $\alpha\in\C$ such that  $\(F_n(\alpha)\)_{n=0}^{\infty}$ is a linear recurrence of order at most $K$. Hence, 
be Lemma~\ref{lem: HK-Det LRS} 
for all  $h\ge K+1$ we have
$$
\Delta_h(\alpha)=0 
$$
for infinitely many $\alpha\in\C$. Therefore, for  all  $h\ge K+1$, the rational function 
$$
\Delta_h(X)=\det\(F_{i+j}(X)\)_{0\le i,j\le h-1} 
$$
has infinitely many zeros, and thus is identical to zero.

Applying  Lemma~\ref{lem: HK-Det LRS}  again, we conclude the proof.

\section{Comments and Further Questions} 
\label{sec:comm}

As we have mentioned Theorem~\ref{thm:LRS-Zeros Qbar}  can be extended to non-simple sequences
without new ideas and just at the cost of introducing more complicated notations.

We also remark that Theorem~\ref{thm:LRS-Zeros Qbar} is an analogue of a result 
of Kulkarni, Mavraki, and Nguyen~\cite[Proposition~2.2]{KMN}, 
see also~\cite[Proposition~2.2]{BDZ}, in which the coefficients $a_1, \ldots, a_k$ of $F_n(X)$ 
in~\eqref{eq:Fn} are constants rather than rational functions as in our case. 
Moreover,~\cite[Proposition~2.2]{KMN} is 
not effective while Theorem~\ref{thm:LRS-Zeros Qbar} is. 

Furrhermore, if as in~\cite{KMN} the coefficients $a_1, \ldots, a_k$ of $F_n(X)$ 
in~\eqref{eq:Fn} are constants then analysing the bounds of~\cite{Sha} underlying 
Lemma~\ref{lem: 1-2 dom. root} we see that $D^4$ can be replaced with $D_0^3D$, 
where $D_0$ is the degree of the Galois closure of $\Q(a_1, \ldots, a_k)$ over $\Q$. In turn, in this case, this leads to a single exponential bound of the form 
$$
n \le \exp\(C D_\alpha\),
$$
in Theorem~\ref{thm:LRS-Zeros Qbar}.

We describe a possible generalisation of Theorem~\ref{thm:LRS-Zeros Qbar} to $S$-unit 
equations in $\ov\Q(X)$. Let  $\Gamma$  be a finitely generated subgroup of $\ov\Q(X)$ 
and fix rational functions $a_1,\ldots,a_k\in\ov\Q(X)$. By~\cite[Proposition~6.1]{AMZ}, 
for any rational functions $u_1,\ldots,u_k\in\Gamma$ such that  
$$
u_i/u_j \not \in \ov \Q, \quad 1\le i <j\le k, \mand \sum_{i=1}^k a_iu_i\ne 0,
$$ 
the set of $\alpha\in\ov\Q$  such that  
\begin{equation}
\label{eq:S-unit eq}
\sum_{i=1}^k a_i(\alpha)u_i(\alpha)=0
\end{equation}
is a set of bounded height, depending only on $a_1,\ldots,a_k$ and the generators of $\Gamma$. 

We now ask for an analogue of Theorem~\ref{thm:LRS-Zeros Qbar} for such equations. It is convenient 
to define the notion of a {\it primitive solution\/} to~\eqref{eq:S-unit eq} as a solution with $u_i(\alpha) = 1$ 
for some $i = 1, \ldots, k$. 

We now ask the following. 

\begin{question} 
\label{quest:S-unit} 
Is it true, under some natural conditions on the generators of $\Gamma$,  that outside of  
a set of bounded height of values $\alpha\in\ov\Q$,  then for 
every primitive solution to~\eqref{eq:S-unit eq}, $\max_{i=1,\ldots,k}\deg u_i$ is bounded only in terms of the degree $D_\alpha$ of the smallest Galois field $\K$ over $\Q$ with $\alpha \in \K$, the functions $a_1,\ldots,a_k$ and the generators of $\Gamma$?
\end{question}

We note that the idea of the proof of~\cite[Proposition~6.1]{AMZ} which reduces $S$-unit equations to 
equations of the type~\eqref{eq:zero LRS}, can perhaps help to tackle Question~\ref{quest:S-unit}. 
 Unfortunately, during this reduction we do not control
well the corresponding polynomials $a_1,f_1,\ldots,a_k,f_k \in\ov\Q(X)$ and in particular it is 
not immediately clear how to verify the necessary condition of Theorem~\ref{thm:LRS-Zeros Qbar}.

We remark that the proof of  Theorem~\ref{thm:LRS-Zeros Qbar}  relies on the fact  that outside of a small set of parameters the corresponding 
specialisations are sequences with at most two dominant roots. 
On the other hand, in the proof  of Theorem~\ref{thm:LRS-PowU} we show that for all but finitely many 
specialisations at roots of unity these sequences have only one dominant root.  These ideas can be used to study many 
other properties of the corresponding linear recurrence sequences. For example, by combining this approach with results and ideas 
of~\cite{Shp,Stew1,Stew2} one can study prime ideal divisors of elements of these sequences.

\section*{Acknowledgement}

The authors are very grateful to the referee for the very helpful comments.

This work  was  supported, in part,  by the Australian Research Council Grants~DP180100201 and DP200100355. The authors are grateful to Umberto Zannier for useful discussions on previous versions of the file. The first author gratefully acknowledges the hospitality and generosity of the Max Planck Institute of Mathematics, where parts of this paper were developed.

\end{document}